\newtheorem{theorem}{Theorem}
\theoremstyle{plain}
\newtheorem{acknowledgement}{Acknowledgement}
\newtheorem{axiom}{Axiom}
\newtheorem{condition}{Condition}
\newtheorem{conjecture}{Conjecture}
\newtheorem{corollary}{Corollary}
\newtheorem{definition}{Definition}
\newtheorem{example}{Example}
\newtheorem{exercise}{Exercise}
\newtheorem{lemma}{Lemma}
\newtheorem{proposition}{Proposition}
\newtheorem{remark}{Remark}
\numberwithin{equation}{section}
\chardef\@x10\chardef\@xv60
\def\tcitime{
\def\@time{%
  \@minute\time\@hour\@minute\divide\@hour\@xv
  \ifnum\@hour<\@x 0\fi\the\@hour:%
  \multiply\@hour\@xv\advance\@minute-\@hour
  \ifnum\@minute<\@x 0\fi\the\@minute
  }}%
\def\x@hyperref#1#2#3{%
   \catcode`\~ = 12
   \catcode`\$ = 12
   \catcode`\_ = 12
   \catcode`\# = 12
   \catcode`\& = 12
   \y@hyperref{#1}{#2}{#3}%
}
\def\y@hyperref#1#2#3#4{%
   #2\ref{#4}#3
   \catcode`\~ = 13
   \catcode`\$ = 3
   \catcode`\_ = 8
   \catcode`\# = 6
   \catcode`\& = 4
}
\def\QCTOpt[#1]#2{%
  \def\QCTOptB{#1}
  \def\QCTOptA{#2}
}
\def\QCTNOpt#1{%
  \def\QCTOptA{#1}
  \let\QCTOptB\empty
}
\def\Qct{%
  \@ifnextchar[{%
    \QCTOpt}{\QCTNOpt}
}
\def\QCBOpt[#1]#2{%
  \def\QCBOptB{#1}%
  \def\QCBOptA{#2}%
}
\def\QCBNOpt#1{%
  \def\QCBOptA{#1}%
  \let\QCBOptB\empty
}
\def\Qcb{%
  \@ifnextchar[{%
    \QCBOpt}{\QCBNOpt}%
}
\def\PrepCapArgs{%
  \ifx\QCBOptA\empty
    \ifx\QCTOptA\empty
      {}%
    \else
      \ifx\QCTOptB\empty
        {\QCTOptA}%
      \else
        [\QCTOptB]{\QCTOptA}%
      \fi
    \fi
  \else
    \ifx\QCBOptA\empty
      {}%
    \else
      \ifx\QCBOptB\empty
        {\QCBOptA}%
      \else
        [\QCBOptB]{\QCBOptA}%
      \fi
    \fi
  \fi
}
\def\GRAPHICSPS#1{%
 \ifcase\GRAPHICSTYPE
   \special{ps: #1}%
 \or
   \special{language "PS", include "#1"}%
 \fi
}%
\def\graffile#1#2#3#4{%
    \bgroup
       \@inlabelfalse
       \leavevmode
       \@ifundefined{bbl@deactivate}{\def~{\string~}}{\activesoff}%
        \raise -#4 \BOXTHEFRAME{%
           \hbox to #2{\raise #3\hbox to #2{\null #1\hfil}}}%
    \egroup
}%
\def\draftbox#1#2#3#4{%
 \leavevmode\raise -#4 \hbox{%
  \frame{\rlap{\protect\tiny #1}\hbox to #2%
   {\vrule height#3 width\z@ depth\z@\hfil}%
  }%
 }%
}%
\let\nographics=\@msidraft
\newif\ifwasdraft
\def\GRAPHIC#1#2#3#4#5{%
   \ifnum\@msidraft=\@ne\draftbox{#2}{#3}{#4}{#5}%
   \else\graffile{#1}{#3}{#4}{#5}%
   \fi
}
\def\addtoLaTeXparams#1{%
    \edef\LaTeXparams{\LaTeXparams #1}}%
\newif\ifBoxFrame \BoxFramefalse
\newif\ifOverFrame \OverFramefalse
\newif\ifUnderFrame \UnderFramefalse
\def\BOXTHEFRAME#1{%
   \hbox{%
      \ifBoxFrame
         \frame{#1}%
      \else
         {#1}%
      \fi
   }%
}
\def\doFRAMEparams#1{\BoxFramefalse\OverFramefalse\UnderFramefalse\readFRAMEparams#1\end}%
\def\readFRAMEparams#1{%
 \ifx#1\end%
  \let\next=\relax
  \else
  \ifx#1i\dispkind=\z@\fi
  \ifx#1d\dispkind=\@ne\fi
  \ifx#1f\dispkind=\tw@\fi
  \ifx#1t\addtoLaTeXparams{t}\fi
  \ifx#1b\addtoLaTeXparams{b}\fi
  \ifx#1p\addtoLaTeXparams{p}\fi
  \ifx#1h\addtoLaTeXparams{h}\fi
  \ifx#1X\BoxFrametrue\fi
  \ifx#1O\OverFrametrue\fi
  \ifx#1U\UnderFrametrue\fi
  \ifx#1w
    \ifnum\@msidraft=1\wasdrafttrue\else\wasdraftfalse\fi
    \@msidraft=\@ne
  \fi
  \let\next=\readFRAMEparams
  \fi
 \next
 }%
\def\IFRAME#1#2#3#4#5#6{%
      \bgroup
      \let\QCTOptA\empty
      \let\QCTOptB\empty
      \let\QCBOptA\empty
      \let\QCBOptB\empty
      #6%
      \parindent=0pt
      \leftskip=0pt
      \rightskip=0pt
      \setbox0=\hbox{\QCBOptA}%
      \@tempdima=#1\relax
      \ifOverFrame
          \typeout{This is not implemented yet}%
          \show\HELP
      \else
         \ifdim\wd0>\@tempdima
            \advance\@tempdima by \@tempdima
            \ifdim\wd0 >\@tempdima
               \setbox1 =\vbox{%
                  \unskip\hbox to \@tempdima{\hfill\GRAPHIC{#5}{#4}{#1}{#2}{#3}\hfill}%
                  \unskip\hbox to \@tempdima{\parbox[b]{\@tempdima}{\QCBOptA}}%
               }%
               \wd1=\@tempdima
            \else
               \textwidth=\wd0
               \setbox1 =\vbox{%
                 \noindent\hbox to \wd0{\hfill\GRAPHIC{#5}{#4}{#1}{#2}{#3}\hfill}\\%
                 \noindent\hbox{\QCBOptA}%
               }%
               \wd1=\wd0
            \fi
         \else
            \ifdim\wd0>0pt
              \hsize=\@tempdima
              \setbox1=\vbox{%
                \unskip\GRAPHIC{#5}{#4}{#1}{#2}{0pt}%
                \break
                \unskip\hbox to \@tempdima{\hfill \QCBOptA\hfill}%
              }%
              \wd1=\@tempdima
           \else
              \hsize=\@tempdima
              \setbox1=\vbox{%
                \unskip\GRAPHIC{#5}{#4}{#1}{#2}{0pt}%
              }%
              \wd1=\@tempdima
           \fi
         \fi
         \@tempdimb=\ht1
         \advance\@tempdimb by -#2
         \advance\@tempdimb by #3
         \leavevmode
         \raise -\@tempdimb \hbox{\box1}%
      \fi
      \egroup%
}%
\def\DFRAME#1#2#3#4#5{%
  \hfil\break
  \bgroup
     \leftskip\@flushglue
     \rightskip\@flushglue
     \parindent\z@
     \parfillskip\z@skip
     \let\QCTOptA\empty
     \let\QCTOptB\empty
     \let\QCBOptA\empty
     \let\QCBOptB\empty
     \vbox\bgroup
        \ifOverFrame
           #5\QCTOptA\par
        \fi
        \GRAPHIC{#4}{#3}{#1}{#2}{\z@}%
        \ifUnderFrame
           \break#5\QCBOptA
        \fi
     \egroup
   \egroup
   \break
}%
\def\FFRAME#1#2#3#4#5#6#7{%
  \@ifundefined{floatstyle}
    {
     \begin{figure}[#1]%
    }
    {
     \ifx#1h
      \begin{figure}[H]%
     \else
      \begin{figure}[#1]%
     \fi
    }
  \let\QCTOptA\empty
  \let\QCTOptB\empty
  \let\QCBOptA\empty
  \let\QCBOptB\empty
  \ifOverFrame
    #4
    \ifx\QCTOptA\empty
    \else
      \ifx\QCTOptB\empty
        \caption{\QCTOptA}%
      \else
        \caption[\QCTOptB]{\QCTOptA}%
      \fi
    \fi
    \ifUnderFrame\else
      \label{#5}%
    \fi
  \else
    \UnderFrametrue%
  \fi
  \begin{center}\GRAPHIC{#7}{#6}{#2}{#3}{\z@}\end{center}%
  \ifUnderFrame
    #4
    \ifx\QCBOptA\empty
      \caption{}%
    \else
      \ifx\QCBOptB\empty
        \caption{\QCBOptA}%
      \else
        \caption[\QCBOptB]{\QCBOptA}%
      \fi
    \fi
    \label{#5}%
  \fi
  \end{figure}%
 }%
\def\makeactives{
  \catcode`\"=\active
  \catcode`\;=\active
  \catcode`\:=\active
  \catcode`\'=\active
  \catcode`\~=\active
}
   \gdef\activesoff{%
      \def"{\string"}%
      \def;{\string;}%
      \def:{\string:}%
      \def'{\string'}%
      \def~{\string~}%
    }
\def\FRAME#1#2#3#4#5#6#7#8{%
 \bgroup
 \ifnum\@msidraft=\@ne
   \wasdrafttrue
 \else
   \wasdraftfalse%
 \fi
 \def\LaTeXparams{}%
 \dispkind=\z@
 \def\LaTeXparams{}%
 \doFRAMEparams{#1}%
 \ifnum\dispkind=\z@\IFRAME{#2}{#3}{#4}{#7}{#8}{#5}\else
  \ifnum\dispkind=\@ne\DFRAME{#2}{#3}{#7}{#8}{#5}\else
   \ifnum\dispkind=\tw@
    \edef\@tempa{\noexpand\FFRAME{\LaTeXparams}}%
    \@tempa{#2}{#3}{#5}{#6}{#7}{#8}%
    \fi
   \fi
  \fi
  \ifwasdraft\@msidraft=1\else\@msidraft=0\fi{}%
  \egroup
 }%
\def\TEXUX#1{"texux"}
\long\def\QQQ#1#2{%
     \long\expandafter\def\csname#1\endcsname{#2}}%
\long\def\QQA#1#2{}%
\def\QTR#1#2{{\csname#1\endcsname {#2}}}%
\def\EXPAND#1[#2]#3{}%
\def\NOEXPAND#1[#2]#3{}%
\def\LaTeXparent#1{}%
\def\ChildStyles#1{}%
\def\ChildDefaults#1{}%
\def\QTagDef#1#2#3{}%
  \providecommand{\UNICODE}[2][]{\protect\rule{.1in}{.1in}}
  \providecommand{\U}[1]{\protect\rule{.1in}{.1in}}
\def\QQfnmark#1{\footnotemark}
 \def\abstract{%
  \if@twocolumn
   \section*{Abstract (Not appropriate in this style!)}%
   \else \small
   \begin{center}{\bf Abstract\vspace{-.5em}\vspace{\z@}}\end{center}%
   \quotation
   \fi
  }%
   \def\registered{\relax\ifmmode{}\r@gistered
                    \else$\m@th\r@gistered$\fi}%
 \def\r@gistered{^{\ooalign
  {\hfil\raise.07ex\hbox{$\scriptstyle\rm\text{R}$}\hfil\crcr
  \mathhexbox20D}}}}{}%
\newdimen\theight
\def\newfmtname{LaTeX2e}
  \DeclareOldFontCommand{\rm}{\normalfont\rmfamily}{\mathrm}
  \DeclareOldFontCommand{\sf}{\normalfont\sffamily}{\mathsf}
  \DeclareOldFontCommand{\tt}{\normalfont\ttfamily}{\mathtt}
  \DeclareOldFontCommand{\bf}{\normalfont\bfseries}{\mathbf}
  \DeclareOldFontCommand{\it}{\normalfont\itshape}{\mathit}
  \DeclareOldFontCommand{\sl}{\normalfont\slshape}{\@nomath\sl}
  \DeclareOldFontCommand{\sc}{\normalfont\scshape}{\@nomath\sc}
\def\alpha{{\Greekmath 010B}}%
\def\beta{{\Greekmath 010C}}%
\def\gamma{{\Greekmath 010D}}%
\def\delta{{\Greekmath 010E}}%
\def\epsilon{{\Greekmath 010F}}%
\def\zeta{{\Greekmath 0110}}%
\def\eta{{\Greekmath 0111}}%
\def\theta{{\Greekmath 0112}}%
\def\iota{{\Greekmath 0113}}%
\def\kappa{{\Greekmath 0114}}%
\def\lambda{{\Greekmath 0115}}%
\def\mu{{\Greekmath 0116}}%
\def\nu{{\Greekmath 0117}}%
\def\xi{{\Greekmath 0118}}%
\def\pi{{\Greekmath 0119}}%
\def\rho{{\Greekmath 011A}}%
\def\sigma{{\Greekmath 011B}}%
\def\tau{{\Greekmath 011C}}%
\def\upsilon{{\Greekmath 011D}}%
\def\phi{{\Greekmath 011E}}%
\def\chi{{\Greekmath 011F}}%
\def\psi{{\Greekmath 0120}}%
\def\omega{{\Greekmath 0121}}%
\def\varepsilon{{\Greekmath 0122}}%
\def\vartheta{{\Greekmath 0123}}%
\def\varpi{{\Greekmath 0124}}%
\def\varrho{{\Greekmath 0125}}%
\def\varsigma{{\Greekmath 0126}}%
\def\varphi{{\Greekmath 0127}}%
\def\nabla{{\Greekmath 0272}}
\def\FindBoldGroup{%
   {\setbox0=\hbox{$\mathbf{x\global\edef\theboldgroup{\the\mathgroup}}$}}%
}
\def\Greekmath#1#2#3#4{%
    \if@compatibility
        \ifnum\mathgroup=\symbold
           \mathchoice{\mbox{\boldmath$\displaystyle\mathchar"#1#2#3#4$}}%
                      {\mbox{\boldmath$\textstyle\mathchar"#1#2#3#4$}}%
                      {\mbox{\boldmath$\scriptstyle\mathchar"#1#2#3#4$}}%
                      {\mbox{\boldmath$\scriptscriptstyle\mathchar"#1#2#3#4$}}%
        \else
           \mathchar"#1#2#3#4%
        \fi
    \else
        \FindBoldGroup
        \ifnum\mathgroup=\theboldgroup 
           \mathchoice{\mbox{\boldmath$\displaystyle\mathchar"#1#2#3#4$}}%
                      {\mbox{\boldmath$\textstyle\mathchar"#1#2#3#4$}}%
                      {\mbox{\boldmath$\scriptstyle\mathchar"#1#2#3#4$}}%
                      {\mbox{\boldmath$\scriptscriptstyle\mathchar"#1#2#3#4$}}%
        \else
           \mathchar"#1#2#3#4%
        \fi
      \fi}
\newif\ifGreekBold  \GreekBoldfalse
\let\SAVEPBF=\pbf
\def\pbf{\GreekBoldtrue\SAVEPBF}%
  \newcounter{equationnumber}
  \def\mathletters{%
     \addtocounter{equation}{1}
     \edef\@currentlabel{\theequation}%
     \setcounter{equationnumber}{\c@equation}
     \setcounter{equation}{0}%
     \edef\theequation{\@currentlabel\noexpand\alph{equation}}%
  }
    \def\BibTeX{{\rm B\kern-.05em{\sc i\kern-.025em b}\kern-.08em
                 T\kern-.1667em\lower.7ex\hbox{E}\kern-.125emX}}}{}%
\def\AmS{{\protect\usefont{OMS}{cmsy}{m}{n}%
                A\kern-.1667em\lower.5ex\hbox{M}\kern-.125emS}}}{}%
\def\@@eqncr{\let\@tempa\relax
    \ifcase\@eqcnt \def\@tempa{& & &}\or \def\@tempa{& &}%
      \else \def\@tempa{&}\fi
     \@tempa
     \if@eqnsw
        \iftag@
           \@taggnum
        \else
           \@eqnnum\stepcounter{equation}%
        \fi
     \fi
     \global\tag@false
     \global\@eqnswtrue
     \global\@eqcnt\z@\cr}
\def\TCItag{\@ifnextchar*{\@TCItagstar}{\@TCItag}}
\def\@TCItag#1{%
    \global\tag@true
    \global\def\@taggnum{(#1)}}
\def\@TCItagstar*#1{%
    \global\tag@true
    \global\def\@taggnum{#1}}
\def\ExitTCILatex{\makeatother }
\let\DOTSI\relax
\def\RIfM@{\relax\ifmmode}%
\def\FN@{\futurelet\next}%
\def\iint{\DOTSI\intno@\tw@\FN@\ints@}%
\def\iiint{\DOTSI\intno@\thr@@\FN@\ints@}%
\def\iiiint{\DOTSI\intno@4 \FN@\ints@}%
\def\idotsint{\DOTSI\intno@\z@\FN@\ints@}%
\def\ints@{\findlimits@\ints@@}%
\newif\iflimtoken@
\newif\iflimits@
\def\findlimits@{\limtoken@true\ifx\next\limits\limits@true
 \else\ifx\next\nolimits\limits@false\else
 \limtoken@false\ifx\ilimits@\nolimits\limits@false\else
 \ifinner\limits@false\else\limits@true\fi\fi\fi\fi}%
\def\multint@{\int\ifnum\intno@=\z@\intdots@                          
 \else\intkern@\fi                                                    
 \ifnum\intno@>\tw@\int\intkern@\fi                                   
 \ifnum\intno@>\thr@@\int\intkern@\fi                                 
 \int}
\def\multintlimits@{\intop\ifnum\intno@=\z@\intdots@\else\intkern@\fi
 \ifnum\intno@>\tw@\intop\intkern@\fi
 \ifnum\intno@>\thr@@\intop\intkern@\fi\intop}%
\def\intic@{%
    \mathchoice{\hskip.5em}{\hskip.4em}{\hskip.4em}{\hskip.4em}}%
\def\negintic@{\mathchoice
 {\hskip-.5em}{\hskip-.4em}{\hskip-.4em}{\hskip-.4em}}%
\def\ints@@{\iflimtoken@                                              
 \def\ints@@@{\iflimits@\negintic@
   \mathop{\intic@\multintlimits@}\limits                             
  \else\multint@\nolimits\fi                                          
  \eat@}
 \else                                                                
 \def\ints@@@{\iflimits@\negintic@
  \mathop{\intic@\multintlimits@}\limits\else
  \multint@\nolimits\fi}\fi\ints@@@}%
\def\intkern@{\mathchoice{\!\!\!}{\!\!}{\!\!}{\!\!}}%
\def\plaincdots@{\mathinner{\cdotp\cdotp\cdotp}}%
\def\intdots@{\mathchoice{\plaincdots@}%
 {{\cdotp}\mkern1.5mu{\cdotp}\mkern1.5mu{\cdotp}}%
 {{\cdotp}\mkern1mu{\cdotp}\mkern1mu{\cdotp}}%
 {{\cdotp}\mkern1mu{\cdotp}\mkern1mu{\cdotp}}}%
\def\RIfM@{\relax\protect\ifmmode}
\def\text{\RIfM@\expandafter\text@\else\expandafter\mbox\fi}
\let\nfss@text\text
\def\text@#1{\mathchoice
   {\textdef@\displaystyle\f@size{#1}}%
   {\textdef@\textstyle\tf@size{\firstchoice@false #1}}%
   {\textdef@\textstyle\sf@size{\firstchoice@false #1}}%
   {\textdef@\textstyle \ssf@size{\firstchoice@false #1}}%
   \glb@settings}
\def\textdef@#1#2#3{\hbox{{%
                    \everymath{#1}%
                    \let\f@size#2\selectfont
                    #3}}}
\newif\iffirstchoice@
\def\Let@{\relax\iffalse{\fi\let\\=\cr\iffalse}\fi}%
\def\vspace@{\def\vspace##1{\crcr\noalign{\vskip##1\relax}}}%
\def\multilimits@{\bgroup\vspace@\Let@
 \baselineskip\fontdimen10 \scriptfont\tw@
 \advance\baselineskip\fontdimen12 \scriptfont\tw@
 \lineskip\thr@@\fontdimen8 \scriptfont\thr@@
 \lineskiplimit\lineskip
 \vbox\bgroup\ialign\bgroup\hfil$\m@th\scriptstyle{##}$\hfil\crcr}%
\def\Sb{_\multilimits@}%
\def\endSb{\crcr\egroup\egroup\egroup}%
\def\Sp{^\multilimits@}%
\newdimen\ex@
\def\rightarrowfill@#1{$#1\m@th\mathord-\mkern-6mu\cleaders
 \hbox{$#1\mkern-2mu\mathord-\mkern-2mu$}\hfill
 \mkern-6mu\mathord\rightarrow$}%
\def\leftarrowfill@#1{$#1\m@th\mathord\leftarrow\mkern-6mu\cleaders
 \hbox{$#1\mkern-2mu\mathord-\mkern-2mu$}\hfill\mkern-6mu\mathord-$}%
\def\leftrightarrowfill@#1{$#1\m@th\mathord\leftarrow
\mkern-6mu\cleaders
 \hbox{$#1\mkern-2mu\mathord-\mkern-2mu$}\hfill
 \mkern-6mu\mathord\rightarrow$}%
\def\overrightarrow{\mathpalette\overrightarrow@}%
\def\overrightarrow@#1#2{\vbox{\ialign{##\crcr\rightarrowfill@#1\crcr
 \noalign{\kern-\ex@\nointerlineskip}$\m@th\hfil#1#2\hfil$\crcr}}}%
\def\overleftarrow{\mathpalette\overleftarrow@}%
\def\overleftarrow@#1#2{\vbox{\ialign{##\crcr\leftarrowfill@#1\crcr
 \noalign{\kern-\ex@\nointerlineskip}$\m@th\hfil#1#2\hfil$\crcr}}}%
\def\overleftrightarrow{\mathpalette\overleftrightarrow@}%
\def\overleftrightarrow@#1#2{\vbox{\ialign{##\crcr
   \leftrightarrowfill@#1\crcr
 \noalign{\kern-\ex@\nointerlineskip}$\m@th\hfil#1#2\hfil$\crcr}}}%
\def\underrightarrow{\mathpalette\underrightarrow@}%
\def\underrightarrow@#1#2{\vtop{\ialign{##\crcr$\m@th\hfil#1#2\hfil
  $\crcr\noalign{\nointerlineskip}\rightarrowfill@#1\crcr}}}%
\def\underleftarrow{\mathpalette\underleftarrow@}%
\def\underleftarrow@#1#2{\vtop{\ialign{##\crcr$\m@th\hfil#1#2\hfil
  $\crcr\noalign{\nointerlineskip}\leftarrowfill@#1\crcr}}}%
\def\underleftrightarrow{\mathpalette\underleftrightarrow@}%
\def\underleftrightarrow@#1#2{\vtop{\ialign{##\crcr$\m@th
  \hfil#1#2\hfil$\crcr
 \noalign{\nointerlineskip}\leftrightarrowfill@#1\crcr}}}%
\def\qopnamewl@#1{\mathop{\operator@font#1}\nlimits@}
\let\nlimits@\displaylimits
\def\setboxz@h{\setbox\z@\hbox}
\def\varlim@#1#2{\mathop{\vtop{\ialign{##\crcr
 \hfil$#1\m@th\operator@font lim$\hfil\crcr
 \noalign{\nointerlineskip}#2#1\crcr
 \noalign{\nointerlineskip\kern-\ex@}\crcr}}}}
 \def\rightarrowfill@#1{\m@th\setboxz@h{$#1-$}\ht\z@\z@
  $#1\copy\z@\mkern-6mu\cleaders
  \hbox{$#1\mkern-2mu\box\z@\mkern-2mu$}\hfill
  \mkern-6mu\mathord\rightarrow$}
\def\leftarrowfill@#1{\m@th\setboxz@h{$#1-$}\ht\z@\z@
  $#1\mathord\leftarrow\mkern-6mu\cleaders
  \hbox{$#1\mkern-2mu\copy\z@\mkern-2mu$}\hfill
  \mkern-6mu\box\z@$}
\def\projlim{\qopnamewl@{proj\,lim}}
\def\injlim{\qopnamewl@{inj\,lim}}
\def\varinjlim{\mathpalette\varlim@\rightarrowfill@}
\def\varprojlim{\mathpalette\varlim@\leftarrowfill@}
\def\varliminf{\mathpalette\varliminf@{}}
\def\varliminf@#1{\mathop{\underline{\vrule\@depth.2\ex@\@width\z@
   \hbox{$#1\m@th\operator@font lim$}}}}
\def\varlimsup{\mathpalette\varlimsup@{}}
\def\varlimsup@#1{\mathop{\overline
  {\hbox{$#1\m@th\operator@font lim$}}}}
\def\align{\@verbatim \frenchspacing\@vobeyspaces \@alignverbatim
You are using the "align" environment in a style in which it is not defined.}
\let\csname endalign*\endcsname =\endtrivlist
\def\alignat{\@verbatim \frenchspacing\@vobeyspaces \@alignatverbatim
You are using the "alignat" environment in a style in which it is not defined.}
\let\csname endalignat*\endcsname =\endtrivlist
\def\xalignat{\@verbatim \frenchspacing\@vobeyspaces \@xalignatverbatim
You are using the "xalignat" environment in a style in which it is not defined.}
\let\csname endxalignat*\endcsname =\endtrivlist
\def\gather{\@verbatim \frenchspacing\@vobeyspaces \@gatherverbatim
You are using the "gather" environment in a style in which it is not defined.}
\let\csname endgather*\endcsname =\endtrivlist
\def\multiline{\@verbatim \frenchspacing\@vobeyspaces \@multilineverbatim
You are using the "multiline" environment in a style in which it is not defined.}
\let\csname endmultiline*\endcsname =\endtrivlist
\def\arrax{\@verbatim \frenchspacing\@vobeyspaces \@arraxverbatim
You are using a type of "array" construct that is only allowed in AmS-LaTeX.}
\def\tabulax{\@verbatim \frenchspacing\@vobeyspaces \@tabulaxverbatim
You are using a type of "tabular" construct that is only allowed in AmS-LaTeX.}
\let\csname endarrax*\endcsname =\endtrivlist
\let\csname endtabulax*\endcsname =\endtrivlist
 \def\endequation{%
     \ifmmode\ifinner 
      \iftag@
        \addtocounter{equation}{-1} 
        $\hfil
           \displaywidth\linewidth\@taggnum\egroup \endtrivlist
        \global\tag@false
        \global\@ignoretrue
      \else
        $\hfil
           \displaywidth\linewidth\@eqnnum\egroup \endtrivlist
        \global\tag@false
        \global\@ignoretrue
      \fi
     \else
      \iftag@
        \addtocounter{equation}{-1} 
        \eqno \hbox{\@taggnum}
        \global\tag@false%
        $$\global\@ignoretrue
      \else
        \eqno \hbox{\@eqnnum}
        $$\global\@ignoretrue
      \fi
     \fi\fi
 }
 \newif\iftag@ \tag@false
 \def\TCItag{\@ifnextchar*{\@TCItagstar}{\@TCItag}}
 \def\@TCItag#1{%
     \global\tag@true
     \global\def\@taggnum{(#1)}}
 \def\@TCItagstar*#1{%
     \global\tag@true
     \global\def\@taggnum{#1}}
     \def\tag{\@ifnextchar*{\@tagstar}{\@tag}}
     \def\@tag#1{%
         \global\tag@true
         \global\def\@taggnum{(#1)}}
     \def\@tagstar*#1{%
         \global\tag@true
         \global\def\@taggnum{#1}}
\begin{document}
\title[Non-degeneracy of Wiener functionals arising from RDEs]{%
Non-degeneracy of Wiener functionals arising from rough differential
equations }
\author{Thomas Cass, Peter Friz, Nicolas Victoir}

\begin{abstract}
Malliavin Calculus is about Sobolev-type regularity of functionals on Wiener
space, the main example being the It\^{o} map obtained by solving stochastic
differential equations. Rough path analysis is about strong regularity of
solution to (possibly stochastic) differential equations. We combine
arguments of both theories and discuss existence of a density for solutions
to stochastic differential equations driven by a general class of
non-degenerate Gaussian processes, including processes with sample path
regularity worse than Brownian motion.
\end{abstract}

\keywords{Malliavin Calculus, Rough Paths Analysis}
\maketitle

\section{Introduction}

It is basic question in probability theory whether a given stochastic
process $\left\{ Y_{t}:t\geq 0\right\} $ with values in some Euclidean space
admits, at fixed positive times, a density with respect to Lebesgue measure.
In a non-degenerate Markovian setting - ellipticity of the generator - an
affirmative answer can be given using Weyl's lemma as discussed in McKean's
classical 1969 text \cite{McK}. Around the same time, H\"{o}rmander's
seminal work on hypoelliptic partial differential operators enabled
probabilists to obtain criteria for existence (and smoothness) for densities
of certain degenerate diffusions. This dependence on the theory of partial
differential equations was removed when P. Malliavin devised a purely
probabilistic approach, perfectly adapted to prove existence and smoothness
of densities.

We recall some key ingredients of Malliavin's machinery, known as \textit{%
Malliavin Calculus} or \textit{stochastic calculus of variations}. Most of
it can formulated in the setting of an abstract Wiener spaces $\left( W,%
\mathcal{H},\mu \right) $. The concept is standard \cite{Be, Ma, Nu06, Sh}
as is the notion of a \textit{weakly non-degenerate} $\mathbb{R}^{e}$-valued
Wiener functional $\varphi $ which has the desirable property that the image
measure $\varphi _{\ast }\mu $ is absolutely continuous with respect to
Lebesgue measure on $\mathbb{R}^{e}$. (Functionals which are \textit{%
non-degenerate }have a smooth density.) Precise definitions are given later
on in the text.

Given these abstract tools, we turn to the standard Wiener space $%
C_{0}\left( \left[ 0,1\right] ,\mathbb{R}^{d}\right) $ equipped with Wiener
measure.\ From It\^{o}'s theory, we can realize diffusion processes by
solving the stochastic differential equation%
\begin{equation*}
dY=\sum_{i=1}^{d}V_{i}\left( Y\right) \circ dB^{i}+V_{0}\left( Y\right)
dt\equiv V\left( Y\right) \circ dB+V_{0}\left( Y\right) dt,\,\,\,Y\left(
0\right) =y_{0}\in \mathbb{R}^{e}
\end{equation*}%
driven by a $d$-dimensional Brownian motion $B$ along sufficiently
well-behaved (drift - resp. diffusion) vector fields $V_{0},...,V_{d}$. The
It\^{o}-map $B\mapsto Y$ is notorious for its lack of strong regularity
properties which rules out the use of any Fr\'{e}chet-calculus. On the
positive side, it is smooth in a Sobolev sense on Wiener space ("\textit{%
smooth in the sense of Malliavin}"). \ Under condition%
\begin{equation*}
(E):\text{ span}\left[ V_{1},...,V_{d}\right] _{y_{0}}=\mathcal{T}_{y_{0}}%
\mathbb{R}^{e}\cong \mathbb{R}^{e};
\end{equation*}%
or a less restrictive H\"{o}rmander's condition (allowing all Lie brackets
of $V_{0},...,V_{d}$ in spanning $\mathcal{T}_{y_{0}}\mathbb{R}^{e}$) the
solution map $B\mapsto Y_{t}$, for $t>0$, is non-degenerate and one reaches
the desired conclusion that $Y_{t}$ has a (smooth) density. This line of
reasoning due to P. Malliavin provides a direct probabilistic approach to
the study of transition densities and has found applications from stochastic
fluid dynamics to interest rate theory. It also shows that the Markovian
structure is not essential and one can, for instance, adapt these ideas to
study densities of It\^{o}-diffusions as was done by Kusuoka and Stroock in 
\cite{KS87}.

Our interest lies in stochastic differential equations of type%
\begin{equation}
dY=V\left( Y\right) dX+V_{0}\left( Y\right) dt  \label{GaussRDE}
\end{equation}

where $X$ is a multi-dimensional Gaussian process. Such differential
equations arise, for instance, in financial mathematics \cite{BaHoSi, Gu06,
Gu07} or as model for studying ergodic properties of non-Markovian systems 
\cite{Ha05}. Assuming momentarily enough sample path regularity so that (\ref%
{GaussRDE}) makes sense path-by-path by Riemann-Stieltjes (or Young)
integration the question whether or not $Y_{t}$ admits a density is
important and far from obvious. To the best of our knowledge, all results in
that direction are restricted to fractional Brownian motion with Hurst
parameter $H>1/2$. Existence of a density for $Y_{t}$, $t>0$, was
established in \cite{NuSa} under condition (E). Smoothness results then
appeared in \cite{NuHu}, and under H\"{o}rmander's condition in \cite{BH}.

The purpose of this paper is to give a first demonstration of the powerful
interplay between Malliavin calculus and rough path analysis. After some
remarks on $\mathcal{H}$-differentiability and a representation of the
Malliavin covariance in terms of a 2D Young integral, we show that weak
non-degeneracy (in the sense of Malliavin) of solutions to (\ref{GaussRDE})
at times $T>0$, and hence existence of a density, remains valid in an almost
generic sense. Our assumptions are

\begin{itemize}
\item The vector fields $V=\left( V_{1},...,V_{d}\right) $ at $y_{0}$ span $%
\mathcal{T}_{y_{0}}\mathbb{R}$, i.e. condition (E).

\item The continuous, centered Gaussian driving signal $X$ is such that the
stochastic differential equation (\ref{GaussRDE}) makes sense as \textit{%
rough differential equation} (RDE), \cite{L98, LQ02}. Applied in our
context, this represents a unified framework which covers \textit{at once}
Gaussian signals with nice sample paths (such as fBM with $H>1/2\,$),
Brownian motion, Gaussian (semi-)martingales, and last not least Gaussian
signals with sample path regularity worse than Brownian motion provided
there exists a sufficiently regular stochastic area\footnote{%
For orientation, fractional Brownian motion is covered for any $H>1/3$.}.

\item The Gaussian driving signal is sufficiently non-degenerate which is
clearly needed to rule our examples such as $X\equiv 0$ or the Brownian
bridge $X_{t}=B_{t}\left( \omega \right) -\left( t/T\right) B_{T}\left(
\omega \right) $.
\end{itemize}

Smoothness of densities remains an open problem, some technical remarks
about the difficulties involved are found in the last section.

\begin{acknowledgement}
We would like to thank James Norris for related discussions and an anonymous
referee whose comments prompted a clarification in our discussion of $%
\mathcal{H}$-regularity. The second author is partially supported by a
Leverhulme Research Fellowship.
\end{acknowledgement}

\section{Preliminaries on ODE and RDEs}

\subsection{Controlled ordinary differential equations}

Consider the ordinary differential equations, driven by a smooth $\mathbb{R}%
^{d}$-valued signal $f=f\left( t\right) $ along sufficiently smooth and
bounded vector fields $V=\left( V_{1},...,V_{d}\right) $ and a drift vector
field $V_{0}$%
\begin{equation*}
dy=V(y)df+V_{0}\left( y\right) dt,\,\,\,y\left( t_{0}\right) =y_{0}\in 
\mathbb{R}^{e}.
\end{equation*}%
We call $U_{t\leftarrow t_{0}}^{f}\left( y_{0}\right) \equiv y_{t}$ the
associated flow. Let $J$ denote the Jacobian of $U$. It satisfies the ODE
obtain by formal differentiation w.r.t. $y_{0}$. More specifically, 
\begin{equation*}
a\mapsto \left\{ \frac{d}{d\varepsilon }U_{t\leftarrow t_{0}}^{f}\left(
y_{0}+\varepsilon a\right) \right\} _{\varepsilon =0}
\end{equation*}%
is a linear map from $\mathbb{R}^{e}\rightarrow \mathbb{R}^{e}$ and we let $%
J_{t\leftarrow t_{0}}^{f}\left( y_{0}\right) $ denote the corresponding $%
e\times e$ matrix. It is immediate to see that 
\begin{equation*}
\frac{d}{dt}J_{t\leftarrow t_{0}}^{f}\left( y_{0}\right) =\left[ \frac{d}{dt}%
M^{f}\left( U_{t\leftarrow t_{0}}^{f}\left( y_{0}\right) ,t\right) \right]
\cdot J_{t\leftarrow t_{0}}^{f}\left( y_{0}\right)
\end{equation*}%
where $\cdot $ denotes matrix multiplication and%
\begin{equation*}
\frac{d}{dt}M^{f}\left( y,t\right) =\sum_{i=1}^{d}V_{i}^{\prime }\left(
y\right) \frac{d}{dt}f_{t}^{i}+V_{0}^{\prime }\left( y\right) .
\end{equation*}
Note that $J_{t_{2}\leftarrow t_{0}}^{f}=J_{t_{2}\leftarrow t_{1}}^{f}\cdot
J_{t_{1}\leftarrow t_{0}}^{f}.$ We can consider Gateaux derivatives in the
driving signal and define%
\begin{equation*}
D_{h}U_{t\leftarrow 0}^{f}=\left\{ \frac{d}{d\varepsilon }U_{t\leftarrow
0}^{f+\varepsilon h}\right\} _{\varepsilon =0}.
\end{equation*}%
One sees that $D_{h}U_{t\leftarrow 0}^{f}$ satisfies a linear ODE and the
variation of constants formula leads to%
\begin{equation*}
D_{h}U_{t\leftarrow 0}^{f}\left( y_{0}\right)
=\int_{0}^{t}\sum_{i=1}^{d}J_{t\leftarrow s}^{f}\left( V_{i}\left(
U_{s\leftarrow 0}^{f}\right) \right) dh_{s}^{i}.
\end{equation*}

\subsection{Rough differential equations\label{SectionRDEs}}

Let $p\in \left( 2,3\right) $. A geometric $p$-rough path $\mathbf{x}$ over $%
\mathbb{R}^{d}$ is a continuous path on $\left[ 0,T\right] $ with values in $%
G^{2}\left( \mathbb{R}^{d}\right) $, the step-$2$ nilpotent group over $%
\mathbb{R}^{d}$, of finite $p$-variation relative to $d$, the
Carnot-Caratheodory metric on $G^{2}\left( \mathbb{R}^{d}\right) $, i.e.%
\begin{equation*}
\sup_{n}\sup_{t_{1}<...<t_{n}}\sum_{i}d\left( \mathbf{x}_{t_{i}},\mathbf{x}%
_{t_{i+1}}\right) ^{p}<\infty .
\end{equation*}%
Following \cite{L98, FV1, FV2, FVbook} we realize $G^{2}\left( \mathbb{R}%
^{d}\right) $ as the set of all $\left( a,b\right) \in $ $\mathbb{R}%
^{d}\oplus \mathbb{R}^{d\times d}$ for which $Sym\left( b\right) \equiv
a^{\otimes 2}/2$. (This point of view is natural: a smooth $\mathbb{R}^{d}$%
-valued path $x=\left( x_{t}^{i}\right) _{i=1,\dots ,d}$, enhanced with its
iterated integrals $\int_{0}^{t}\int_{0}^{s}dx_{u}^{i}dx_{s}^{j}$, gives
canonically rise to a $G^{2}\left( \mathbb{R}^{d}\right) $-valued path.)
Given $\left( a,b\right) \in G^{2}\left( \mathbb{R}^{d}\right) $ one gets
rid of the redundant $Sym\left( b\right) $ by $\left( a,b\right) \mapsto $ $%
\left( a,b-a^{\otimes }/2\right) \in \mathbb{R}^{d}\oplus so\left( d\right) $%
. Applied to $x$ enhanced with its iterated integrals over $\left[ 0,t\right]
$ this amounts to look at the path $x$ and its (signed) areas $%
\int_{0}^{t}\left( x_{s}^{i}-x_{0}^{i}\right) dx_{s}^{j}-\int_{0}^{t}\left(
x_{s}^{j}-x_{0}^{j}\right) dx_{s}^{i},\,\,i,j\in \left\{ 1,\dots ,d\right\} .
$ Without going into too much detail, the group structure on $G^{2}\left( 
\mathbb{R}^{d}\right) $ can be identified with the (truncated) tensor
multiplication and is relevant as it allows to relate algebraically the path
and area increments over adjacent intervals; the mapping $\left( a,b\right)
\mapsto $ $\left( a,b-a^{\otimes }/2\right) $ maps the Lie group $%
G^{2}\left( \mathbb{R}^{d}\right) $ to its Lie algebra; at last, the
Carnot-Caratheodory metric is defined intrinsically as (left-)invariant
metric on $G^{2}\left( \mathbb{R}^{d}\right) $ and satisfies $\left\vert
a\right\vert +\left\vert b\right\vert ^{1/2}\lesssim d\left( \left(
0,0\right) ,\left( a,b\right) \right) \lesssim \left\vert a\right\vert
+\left\vert b\right\vert ^{1/2}$.

One can then think of a geometric $p$-rough path $\mathbf{x}$ as a path $x:%
\left[ 0,T\right] \rightarrow \mathbb{R}^{d}$ enhanced with its iterated
integrals (equivalently: area integrals) although the later need not make
classical sense. For instance, \textit{almost every}\ joint realization of
Brownian motion and L\'{e}vy's area process is a geometric $p$-rough path 
\cite{LQ02, FV1, FLS}. The Lyons theory of rough paths \cite{LQ98, LQ02,
FVbook} then gives deterministic meaning to the rough differential equation
(RDE)%
\begin{equation*}
dy=V\left( y\right) d\mathbf{x}
\end{equation*}%
for $\mathrm{Lip}^{\gamma }$-vector fields (in the sense of Stein), $\gamma
>p$. By considering the space-time rough path $\mathbf{\tilde{x}}=\left( t,%
\mathbf{x}\right) $ and $\tilde{V}=\left( V_{0},V_{1},...,V_{d}\right) $
this form is general enough to cover differential equations with drift%
\footnote{%
(1) If $\mathbf{x}$ lifts a $\mathbb{R}^{d}$-valued path $x$, then $\mathbf{%
\tilde{x}}$ is construct with cross-integrals of type $\int xdt,\int tdx$
all of which are canonically defined. (2) Including $V_{0}$ in the
collection $\tilde{V}$ leads to suboptimal regularity requirements for $V_{0}
$ which could be avoided by a direct analysis.}. The solution induces a flow 
$y_{0}\mapsto U_{t\leftarrow t_{0}}^{\mathbf{x}}\left( y_{0}\right) $. The
Jacobian $J_{t\leftarrow t_{0}}^{\mathbf{x}}$ of the flow exists and
satisfies a linear RDE, as does the directional derivative%
\begin{equation*}
D_{h}U_{t\leftarrow 0}^{\mathbf{X}}=\left\{ \frac{d}{d\varepsilon }%
U_{t\leftarrow 0}^{T_{\varepsilon h}\mathbf{x}}\right\} _{\varepsilon =0}
\end{equation*}%
for a smooth path $h$. If $\mathbf{x}$ arises from a smooth path $x$
together with its iterated integrals the \textit{translated rough path} $%
T_{h}\mathbf{x}$ is nothing but $x+h$ together with its iterated integrals.
In the general case, we assume $h\in C^{q\text{-var}}$ with $1/p+1/q>1$, the
translation $T_{h}\mathbf{x}$ can be written in terms of $\mathbf{x}$ and
cross-integrals between $\pi _{1}\left( \mathbf{x}_{0,\cdot }\right) =:x$
and the perturbation $h$. (These integrals are well-defined Young-integrals.)

\begin{proposition}
\label{VarOfConst}Let $\mathbf{X}$ be a geometric $p$-rough paths over $%
\mathbb{R}^{d}$ and $h\in C^{q}\left( \left[ 0,1\right] ,\mathbb{R}%
^{d}\right) $ such that $1/p+1/q>1$. Then%
\begin{equation*}
D_{h}U_{t\leftarrow 0}^{\mathbf{X}}\left( y_{0}\right)
=\int_{0}^{t}\sum_{i=1}^{d}J_{t\leftarrow s}^{\mathbf{X}}\left( V_{i}\left(
U_{s\leftarrow 0}^{\mathbf{X}}\right) \right) dh_{s}^{i}
\end{equation*}%
where the right hand side is well-defined as Young integral.
\end{proposition}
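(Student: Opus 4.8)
The plan is to read the identity as the rough-path version of the classical variation-of-constants formula derived in Subsection~2.1 and to establish it by smooth approximation of the driving signal. First I would pick smooth $\mathbb{R}^{d}$-valued paths $x^{(n)}$ whose canonical lifts $\mathbf{x}^{(n)}$ converge to $\mathbf{X}$ in $p$-variation with $\sup_{n}\|\mathbf{x}^{(n)}\|_{p\text{-var}}<\infty$ (geodesic or mollifier approximations, available since $\mathbf{X}$ is geometric). For such smooth $x^{(n)}$ every object is classical, and the variation-of-constants formula already recorded gives
\[
D_{h}U^{x^{(n)}}_{t\leftarrow 0}(y_{0})=\int_{0}^{t}\sum_{i=1}^{d}J^{x^{(n)}}_{t\leftarrow s}\bigl(V_{i}(U^{x^{(n)}}_{s\leftarrow 0})\bigr)\,dh^{i}_{s}
\]
as a Riemann--Stieltjes (hence Young) integral; the task is then to pass to the limit $n\to\infty$ on both sides.

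\textbf{The right-hand side.} Here I would invoke the continuity of the It\^o--Lyons map (Lyons' universal limit theorem, \cite{LQ02,FVbook}) applied both to the RDE defining $U$ and to the linear RDE defining $J$: the maps $s\mapsto U^{x^{(n)}}_{s\leftarrow 0}$ and $s\mapsto J^{x^{(n)}}_{t\leftarrow s}$ then converge uniformly on $[0,t]$ with uniformly bounded $p$-variation. Consequently the integrands $s\mapsto J^{x^{(n)}}_{t\leftarrow s}(V_{i}(U^{x^{(n)}}_{s\leftarrow 0}))$ converge uniformly with uniformly bounded $p$-variation, hence --- interpolating uniform convergence against the $p$-variation bound, using the slack $1/p+1/q>1$ --- in $p'$-variation for some $p'\in(p,q/(q-1))$; continuity of the Young integral in its integrand (with $h\in C^{q}$ fixed) then shows that the right-hand sides converge to $\int_{0}^{t}\sum_{i}J^{\mathbf{X}}_{t\leftarrow s}(V_{i}(U^{\mathbf{X}}_{s\leftarrow 0}))\,dh^{i}_{s}$, which in particular shows this integral is well-defined.

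\textbf{The left-hand side, and the main obstacle.} The hard part will be showing $D_{h}U^{x^{(n)}}_{t\leftarrow 0}(y_{0})\to D_{h}U^{\mathbf{X}}_{t\leftarrow 0}(y_{0})$, i.e.\ that the $\varepsilon$-derivative at $0$ commutes with the limit $n\to\infty$. I expect to handle this by noting that $\varepsilon\mapsto T_{\varepsilon h}\mathbf{X}$ is a Lipschitz curve in the space of $p$-rough paths --- the cross-integrals defining the translation are Young integrals, legitimate precisely because $1/p+1/q>1$ --- and that the It\^o--Lyons map $\mathbf{z}\mapsto U^{\mathbf{z}}_{t\leftarrow 0}(y_{0})$ is locally Lipschitz, indeed locally $C^{1}$, in $\mathbf{z}$, with estimates uniform over bounded sets of rough paths. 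Since $T_{\varepsilon h}\mathbf{x}^{(n)}\to T_{\varepsilon h}\mathbf{X}$ uniformly for $\varepsilon$ in a compact neighbourhood of $0$, with uniform bounds, the difference quotients $\varepsilon^{-1}\bigl(U^{T_{\varepsilon h}\mathbf{x}^{(n)}}_{t\leftarrow 0}(y_{0})-U^{\mathbf{x}^{(n)}}_{t\leftarrow 0}(y_{0})\bigr)$ should converge as $\varepsilon\to0$ uniformly in $n$, which licenses the exchange of limits. The technical heart is thus the stability of RDE flows under perturbation of the driving signal, with constants controlled only by $p$-variation norms --- local Lipschitz continuity of the It\^o--Lyons map together with the linear RDE satisfied by its derivative; granting this, letting $n\to\infty$ in the displayed identity finishes the proof.

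\textbf{Alternative route.} One could also bypass approximation: $s\mapsto U^{T_{\varepsilon h}\mathbf{X}}_{s\leftarrow 0}(y_{0})$ solves the mixed rough/Young equation $dy=V(y)\,d\mathbf{X}+\varepsilon\,V(y)\,dh$ (meaningful since $1/p+1/q>1$), and differentiating it in $\varepsilon$ at $\varepsilon=0$ yields the inhomogeneous linear equation $d\zeta=(dM^{\mathbf{X}})\,\zeta+\sum_{i}V_{i}(U^{\mathbf{X}}_{s\leftarrow 0})\,dh^{i}_{s}$ for $\zeta=D_{h}U^{\,\cdot\leftarrow 0}$, whose variation-of-constants solution is exactly the asserted Young integral; this is the same computation carried out at the level of equations rather than through smooth paths.
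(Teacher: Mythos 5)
Your proposal is correct and follows essentially the same route as the paper: the paper's (terse) proof likewise invokes Lyons' limit theorem to identify $U$, $J$ and $D_{h}U$ as components of a joint RDE driven by $\mathbf{X}$, so that the classical variation-of-constants formula of Subsection~2.1 survives the passage from smooth drivers to rough ones --- your ``alternative route'' is in fact almost verbatim what the paper writes. The only point the paper flags that you pass over silently is that the linear RDEs for $J$ and $D_{h}U$ have coefficients of linear growth, so one must separately rule out explosion (the paper does this by iterative expansion as in Lyons' original work) before the limit theorem can be applied.
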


\begin{proof}
At least when $\gamma >p+1$, both $J_{t\leftarrow 0}^{\mathbf{X}}$ and $%
D_{h}U_{t\leftarrow 0}^{\mathbf{X}}$ satisfy (jointly with $U_{t\leftarrow
0}^{\mathbf{X}}$) a RDE driven by $\mathbf{X}$. This is an application of
Lyons' limit theorem and discussed in detail in \cite{LQ97, LQ98}. A little
care is needed since the resulting vector fields now have linear growth. It
suffices to rule out explosion so that the problem can be localized. The
needed remark is that $J_{t\leftarrow 0}^{\mathbf{X}}$ also satisfy a linear
RDE of form 
\begin{equation*}
dJ_{t\leftarrow 0}^{\mathbf{X}}=d\mathbf{M}^{\mathbf{X}}\cdot J_{t\leftarrow
0}^{\mathbf{X}}\left( y_{0}\right)
\end{equation*}%
where $d\mathbf{M}^{\mathbf{X}}=V^{\prime }\left( U_{t\leftarrow 0}^{\mathbf{%
X}}\left( y_{0}\right) \right) d\mathbf{X}$. Explosion can be ruled out by
direct iterative expansion and estimates of the Einstein sum as in \cite{L98}%
.
\end{proof}

\section{RDEs driven by Gaussian signals}

We consider a continuous, centered Gaussian process with independent
components $X=\left( X^{1},...,X^{d}\right) $ started at zero. This gives
rise to an abstract Wiener space $\left( W,\mathcal{H},\mu \right) $ where $%
W=\mathcal{\bar{H}}\subset C_{0}\left( \left[ 0,T\right] ,\mathbb{R}%
^{d}\right) $. Note that $\mathcal{H}=\oplus _{i=1}^{d}\mathcal{H}^{\left(
i\right) }$ and recall that elements of $\mathcal{H}$ are of form $h_{t}=%
\mathbb{E}\left( X_{t}\xi \left( h\right) \right) $ where $\xi \left(
h\right) $ is a Gaussian random variable. The ("reproducing kernel")
Hilbert-structure on $\mathcal{H}$ is given by $\,\left\langle h,h^{\prime
}\right\rangle _{\mathcal{H}}:=\mathbb{E}\left( \xi \left( h\right) \xi
\left( h^{\prime }\right) \right) $.

Existence of a Gaussian geometric $p$-rough path above $X$ is tantamount to
the existence of certain L\'{e}vy area integrals. From the point of view of
Stieltjes integration the existence of L\'{e}vy's area for Brownian motion
is a miracle. In fact, a subtle cancellation due to orthogonality of
increments of Brownian motion is responsible for convergence and this
suggests that processes with sufficiently fast decorrelation of their
increments will also give rise to a stochastic L\'{e}vy area. The resulting
technical conditions appears in \cite{LQ02} for instance. For Gaussian
processes, a cleaner (and slightly weaker) condition can be given in terms
of the \textit{2D\ variation} properties of the covariance function $R\left(
s,t\right) =\mathbb{E}\left( X_{s}\otimes X_{t}\right) =diag\left( R^{\left(
1\right) },...,R^{\left( k\right) }\right) $.The assumption, writing $%
X_{t,t^{\prime }}:=X_{t^{\prime }}-X_{t}$,%
\begin{equation*}
\left\vert R\right\vert _{\rho \text{-var;}\left[ 0,T\right] ^{2}}^{\rho
}:=\sup_{D=\left( t_{i}\right) }\sum_{i,j}\left\vert \mathbb{E}\left(
X_{t_{i},t_{i+1}}X_{t_{j},t_{j+1}}\right) \right\vert ^{\rho }<\infty 
\end{equation*}%
for $\rho <2$ is known \cite{FV07} to be sufficient (and essentially
necessary) for the existence of a natural lift of $X$ to a geometric $p$%
-rough path $\mathbf{X}$ for any $p>2\rho $. Observe that the covariance of
Brownian motion has finite $\rho $-variation with $\rho =1$. As a more
general example, a direct computation shows that fractional Brownian motion
has finite $\rho $-variation with $\rho =1/\left( 2H\right) $.

The assumption of $\left\vert R\right\vert _{\rho \text{-var}}<\infty $ has
other benefits, notably the following embedding theorem \cite{FV07}. Since
it is crucial for our purposes we repeat the short proof, as we apply it
componentwise we can assume $d=1$.

\begin{proposition}
\label{CM_pVar_embedding}Let $R$ be the covariance of a real-valued centered
Gaussian process. If $R$ is of finite $\rho $-variation, then $\mathcal{H}%
\hookrightarrow C^{\rho \text{-var}}$. More, precisely, for all $h\in 
\mathcal{H}$,%
\begin{equation*}
\left\vert h\right\vert _{\rho \text{$-var$;}\left[ s,t\right] }\leq \sqrt{%
\left\langle h,h\right\rangle _{\mathcal{H}}}\sqrt{R_{\rho \text{-var;}\left[
s,t\right] ^{2}}}.
\end{equation*}
\end{proposition}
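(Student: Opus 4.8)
The plan is to prove the displayed estimate for an \emph{arbitrary} dissection $D=(t_i)$ of $[s,t]$ and then pass to the supremum over $D$. Once this is done the embedding $\mathcal H\hookrightarrow C^{\rho\text{-var}}$ will be immediate: elements of $\mathcal H$ are automatically continuous (each $h$ has the form $r\mapsto\mathbb E[X_r\,\xi(h)]$ with $X$ continuous), and the estimate with $[s,t]=[0,T]$ exhibits the inclusion $(\mathcal H,|\cdot|_{\mathcal H})\hookrightarrow(C^{\rho\text{-var}},|\cdot|_{\rho\text{-var}})$ as a bounded linear map.

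Fix $D=(t_i)$ and set $S:=\sum_i|h_{t_i,t_{i+1}}|^\rho$. First I would invoke the representation $h_r=\mathbb E[X_r\xi(h)]$ recalled above, which gives $h_{t_i,t_{i+1}}=\mathbb E[X_{t_i,t_{i+1}}\xi(h)]$; then, writing $\epsilon_i$ for the sign of $h_{t_i,t_{i+1}}$ and linearising,
\[
S=\sum_i|h_{t_i,t_{i+1}}|^{\rho-1}\epsilon_i\,\mathbb E\big[X_{t_i,t_{i+1}}\xi(h)\big]=\mathbb E\Big[\Big(\sum_i|h_{t_i,t_{i+1}}|^{\rho-1}\epsilon_i X_{t_i,t_{i+1}}\Big)\xi(h)\Big].
\]
Next I would apply Cauchy--Schwarz in $L^2(\mu)$ to peel off the factor $\|\xi(h)\|_{L^2}=\sqrt{\langle h,h\rangle_{\mathcal H}}$, so that only the $L^2$-norm of $Z:=\sum_i|h_{t_i,t_{i+1}}|^{\rho-1}\epsilon_i X_{t_i,t_{i+1}}$ remains. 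Expanding $\|Z\|_{L^2}^2=\sum_{i,j}|h_{t_i,t_{i+1}}|^{\rho-1}|h_{t_j,t_{j+1}}|^{\rho-1}\epsilon_i\epsilon_j\,\mathbb E[X_{t_i,t_{i+1}}X_{t_j,t_{j+1}}]$, bounding the covariances by their absolute values, and applying Hölder to the two-index sum with conjugate exponents $\tfrac{\rho}{\rho-1}$ and $\rho$, I expect to get $\|Z\|_{L^2}^2\le S^{2(\rho-1)/\rho}\,|R|_{\rho\text{-var};[s,t]^2}$: because $(\rho-1)\cdot\tfrac{\rho}{\rho-1}=\rho$, the factors $|h_{t_i,t_{i+1}}|^{\rho-1}$ get raised exactly to the power $\rho$ and recombine as $\big(\sum_i|h_{t_i,t_{i+1}}|^\rho\big)^2=S^2$, while the covariance factors contribute $\sum_{i,j}|\mathbb E[X_{t_i,t_{i+1}}X_{t_j,t_{j+1}}]|^\rho\le|R|^\rho_{\rho\text{-var};[s,t]^2}$ directly from the definition of the two-parameter $\rho$-variation. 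Combining the two bounds gives $S\le S^{(\rho-1)/\rho}\sqrt{|R|_{\rho\text{-var};[s,t]^2}}\,\sqrt{\langle h,h\rangle_{\mathcal H}}$; dividing by $S^{(\rho-1)/\rho}$ (legitimate since $S$ is a finite sum, and trivial if $S=0$) and using $1-\tfrac{\rho-1}{\rho}=\tfrac1\rho$ turns this into $S^{1/\rho}\le\sqrt{\langle h,h\rangle_{\mathcal H}}\,\sqrt{|R|_{\rho\text{-var};[s,t]^2}}$. Taking the supremum over $D$ would then finish the proof. If $\rho<1$ I would first replace $\rho$ by $1$, using that finite $\rho$-variation is stronger than finite $1$-variation, the Hölder step then being vacuous.

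The one genuinely delicate point is the exponent bookkeeping in the Hölder step: it works precisely because $\rho\in[1,2)$ makes $\big(\tfrac{\rho}{\rho-1},\rho\big)$ a valid conjugate pair and because the $|h|$-factors then reassemble into exactly $S^2$, which is what allows the self-referential inequality $S\le S^{(\rho-1)/\rho}(\cdots)$ to be solved for $S^{1/\rho}$. The remaining ingredients --- the first-chaos representation of $\mathcal H$, the reproducing-kernel identity $\|\xi(h)\|_{L^2}^2=\langle h,h\rangle_{\mathcal H}$, and the expansion of the $L^2$-norm of a Gaussian linear combination --- are routine.
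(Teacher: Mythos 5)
Your argument is correct and is essentially the paper's own proof: the same first-chaos representation $h_t=\mathbb{E}[X_t\xi(h)]$, Cauchy--Schwarz in $L^2(\mu)$, and H\"older with conjugate exponents $\bigl(\rho,\tfrac{\rho}{\rho-1}\bigr)$ on the double covariance sum. The only cosmetic difference is that the paper bounds $\bigl(\sum_j|h_{t_j,t_{j+1}}|^\rho\bigr)^{1/\rho}$ via the supremum over the $\ell^{\rho'}$ unit ball, whereas you substitute the extremal weights $\beta_i=|h_{t_i,t_{i+1}}|^{\rho-1}\epsilon_i$ explicitly and solve the resulting self-referential inequality for $S^{1/\rho}$.
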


\begin{proof}
Every element $h\in \mathcal{H}$ can be written as $h_{t}=\mathbb{E}\left(
ZX_{t}\right) $ for the Gaussian r.v. $Z=\xi \left( h\right) $. We may
assume that $\left\langle h,h\right\rangle _{\mathcal{H}}=\mathbb{E}\left(
Z_{.}^{2}\right) =1$. Let $\left( t_{j}\right) $ be a subdivision of $\left[
s,t\right] $ and write $\left\vert x\right\vert _{l^{r}}=\left(
\sum_{i}x_{i}^{r}\right) ^{1/r}$ for $r\geq 1.$ If $\rho ^{\prime }$ denote
the H\"{o}lder conjugate of $\rho $ we have%
\begin{eqnarray*}
&&\left( \sum_{j}\left\vert h_{t_{j},t_{j+1}}\right\vert ^{\rho }\right)
^{1/\rho }=\sup_{\beta ,\left\vert \beta \right\vert _{l^{\rho ^{\prime
}}}\leq 1}\sum_{j}\beta _{j}h_{t_{j},t_{j+1}}=\sup_{\beta ,\left\vert \beta
\right\vert _{l^{\rho ^{\prime }}}\leq 1}\mathbb{E}\left( Z\sum_{j}\beta
_{j}X_{t_{j},t_{j+1}}\right) \\
&\leq &\sup_{\beta ,\left\vert \beta \right\vert _{l^{\rho ^{\prime }}}\leq
1}\sqrt{\sum_{j,k}\beta _{j}\beta _{k}\mathbb{E}\left(
X_{t_{j},t_{j+1}}X_{t_{k},t_{k+1}}\right) } \\
&\leq &\sup_{\beta ,\left\vert \beta \right\vert _{l^{\rho ^{\prime }}}\leq
1}\sqrt{\left( \sum_{j,k}\left\vert \beta _{j}\right\vert ^{\rho ^{\prime
}}\left\vert \beta _{k}\right\vert ^{\rho ^{\prime }}\right) ^{\frac{1}{\rho
^{\prime }}}\left( \sum_{j,k}\left\vert \mathbb{E}\left(
X_{t_{j},t_{j+1}}X_{t_{k},t_{k+1}}\right) \right\vert ^{\rho }\right) ^{%
\frac{1}{\rho }}} \\
&\leq &\left( \sum_{j,k}\left\vert \mathbb{E}\left(
X_{t_{j},t_{j+1}}X_{t_{k},t_{k+1}}\right) \right\vert ^{\rho }\right)
^{1/\left( 2\rho \right) }\leq \sqrt{R_{\rho \text{-var;}\left[ s,t\right]
^{2}}}\text{.}
\end{eqnarray*}%
Optimizing over all subdivision $\left( t_{j}\right) $ of $\left[ s,t\right]
,$ we obtain our result.
\end{proof}

\bigskip

One observes that for Brownian motion ($\rho =1$) this embedding is sharp.
Furthermore, the rough path translation $T_{h}\mathbf{X}$, which involves
Young integrals, makes sense if $\rho <\rho ^{\ast }=3/2$. (This critical
value comes from $1/\rho +1/\left( 2p\right) \sim 1/\rho +1/\left( 2\rho
\right) =3/\left( 2\rho \right) $ and equating to $1$.) The moral is that
one can take deterministic directional derivatives in Cameron-Martin
directions as long as $\rho <3/2$. (En passant, we see that the effective
tangent space to Gaussian RDE solutions is strictly bigger than the usual
Cameron-Martin space as long as $\rho <3/2$. In the special case of
Stratonovich SDEs a related result predates rough path theory and goes back
to Kusuoka \cite{Ku}). For this reason we will assume%
\begin{equation*}
\rho <3/2
\end{equation*}%
for the remainder of this paper. This entails that we are dealing with
geometric $p$-rough paths $\mathbf{X}$ for which we may assume%
\begin{equation*}
p\in \left( 2\rho ,3\right) \text{.}
\end{equation*}

\begin{definition}
\cite{Ku82}, \cite[Section 4.1.3]{Nu06}, \cite[Section 3.3]{UZ00}Given an
abstract Wiener space $\left( W,\mathcal{H},\mu \right) $, a random variable
(i.e. measurable map) $F:W\rightarrow \mathbb{R}$ is continuously $\mathcal{H%
}$-differentiable, in symbols $F\in C_{\mathcal{H}}^{1}$, if for $\mu $%
-almost every $\omega $, the map%
\begin{equation*}
h\in \mathcal{H}\mapsto F\left( \omega +h\right)
\end{equation*}%
is continuously Fr\'{e}chet differentiable. A vector-valued r.v. $F=\left(
F^{1},...,F^{e}\right) :W\rightarrow \mathbb{R}^{e}$ is continuously $%
\mathcal{H}$-differentiable iff each $F^{i}$ is continuously $\mathcal{H}$%
-differentiable. In particular, $\mu $-almost surely, $DF\left( \omega
\right) =\left( DF^{1}\left( \omega \right) ,...,DF^{e}\left( \omega \right)
\right) $ is a linear bounded map from $\mathcal{H}\rightarrow \mathbb{R}%
^{e} $
\end{definition}

\begin{remark}
(1) The notion of continuous $\mathcal{H}$-differentiability was introduced
in \cite{Ku82} and plays a fundamental role in the study of transformation
of measure on Wiener space. Integrability properties of $F$ and $DF$ aside, $%
C_{\mathcal{H}}^{1}$-regularity is stronger than Malliavin differentiability
in the usual sense. Indeed, by \cite[Thm 4.1.3]{Nu06} (see also \cite{Ku82}, 
\cite[Section 3.3]{UZ00}) $C_{\mathcal{H}}^{1}$ implies $\mathbb{D}%
_{loc}^{1,2}$-regularity where the definition of $\mathbb{D}_{loc}^{1,2}$ is
based on the commonly used Shigekawa Sobolev space $\mathbb{D}^{1,p}$. (Our
notation here follows \cite[Sec. 1.2, 1.3.4]{Nu06}). This remark will be
important to us since it justifies the use of Bouleau-Hirsch's criterion
(e.g. \cite[Section 2.1.2]{Nu06}) for establishing absolute continuity of $F$
(cf. proof of theorem \ref{ThmEllDens}). \newline
(2) Although not relevant to the sequel of this paper, it is interesting to
compare $C_{\mathcal{H}}^{1}$-regularity with the Kusuoka-Stroock Sobolev
spaces. Following \cite{Ku82b, KS82} one defines $\mathbb{\tilde{D}}^{1,p}$%
as the space of random-variables $F$ which are (i) ray-absolutely-continuous
(RAC) in the sense that for every $h\in \mathcal{H}$ there is an absolutely
continuous version of the process $\left\{ F\left( \omega +th\right) :t\in 
\mathbb{R}\right\} $; (ii) stochastically Gateaux differentiable (SGD) in
the sense that there exists an $\mathcal{H}$-valued r.v. $\tilde{D}F$ such
that for every $h\in \mathcal{H}$,%
\begin{equation}
\left( F\left( \omega +th\right) -F\left( \omega \right) \right)
/t\rightarrow \left\langle \tilde{D}F,h\right\rangle _{\mathcal{H}}\text{ as 
}t\rightarrow 0  \label{SGD}
\end{equation}%
probability with respect to $\mu $; and (iii) such that $F\in L^{p}$ and $%
\tilde{D}F\in L^{p}\left( \mathcal{H}\right) $.\newline
From Sugita \cite{Su85} it is known that $\mathbb{\tilde{D}}^{1,p}=\mathbb{D}%
^{1,p}$, at least for $p\in \left( 1,\infty \right) $. Since $C_{\mathcal{H}%
}^{1}$-regularity is a local property it has nothing to say about the
integrability property (iii) but it does imply a fortiori the regularity
properties (i) and (ii). Indeed, (i) is trivially satisfied (without the
need of $h$-dependent modifications!). As for (ii), $\tilde{D}F$ is given by
the Fr\'{e}chet differential $DF\left( \omega \right) $ of $h\in \mathcal{H}%
\mapsto F\left( \omega +h\right) $ and the convergence (\ref{SGD}) holds not
only in probability but $\mu $-almost surely.
\end{remark}

\begin{proposition}
\label{GateauxFrechet}Let $\rho <3/2$. For fixed $t\geq 0$, the $\mathbb{R}%
^{e}$-valued random variable%
\begin{equation*}
\omega \mapsto U_{t\leftarrow 0}^{\mathbf{X}\left( \omega \right) }\left(
y_{0}\right)
\end{equation*}%
is continuously $\mathcal{H}$-differentiable.
\end{proposition}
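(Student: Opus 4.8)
The plan is to fix $\omega$ outside the $\mu$-null set on which $\mathbf{X}(\omega)$ fails to be a geometric $p$-rough path and to show that $h\mapsto F(\omega+h)$ is continuously Fr\'{e}chet differentiable on $\mathcal{H}$, where $F$ abbreviates the $\mathbb{R}^{e}$-valued functional $\omega\mapsto U_{t\leftarrow 0}^{\mathbf{X}(\omega)}(y_{0})$; the case $t=0$ is trivial, so assume $t>0$, and since $\mathcal{H}$-differentiability is defined componentwise we may take $e=1$. By Proposition \ref{CM_pVar_embedding} (applied componentwise) every $h\in\mathcal{H}$ belongs to $C^{\rho\text{-var}}$, and since $p\in(2\rho,3)$ forces $1/p>1/3$ while $\rho<3/2$ forces $1/\rho>2/3$, we have $1/p+1/\rho>1$. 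Consequently all the cross Young integrals entering the translation operator converge, $T_{h}\mathbf{X}(\omega)$ is again a geometric $p$-rough path, and $F(\omega+h)=U_{t\leftarrow 0}^{T_{h}\mathbf{X}(\omega)}(y_{0})$ in accordance with the Young translation formula recalled above.

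First I would identify the Gateaux derivative. From the flow property $T_{h+\varepsilon k}\mathbf{X}=T_{\varepsilon k}(T_{h}\mathbf{X})$ of the translation operator and Proposition \ref{VarOfConst}, applied to the geometric $p$-rough path $T_{h}\mathbf{X}(\omega)$ and the direction $k\in\mathcal{H}\subset C^{\rho\text{-var}}$, the map $\varepsilon\mapsto F(\omega+h+\varepsilon k)$ is differentiable at $\varepsilon=0$ with
\begin{equation*}
\left.\frac{d}{d\varepsilon}\right|_{\varepsilon=0}F(\omega+h+\varepsilon k)=\int_{0}^{t}\sum_{i=1}^{d}J_{t\leftarrow s}^{T_{h}\mathbf{X}(\omega)}\left(V_{i}\left(U_{s\leftarrow 0}^{T_{h}\mathbf{X}(\omega)}\right)\right)dk_{s}^{i},
\end{equation*}
a Young integral against $k$. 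Since the integrand has finite $p$-variation in $s$ (it solves, jointly with the flow, a linear RDE driven by $T_{h}\mathbf{X}(\omega)$), the basic Young estimate bounds this integral by $C(\omega,h)\left\vert k\right\vert_{\rho\text{-var};[0,t]}$, which by Proposition \ref{CM_pVar_embedding} is at most $C(\omega,h)\sqrt{\left\langle k,k\right\rangle_{\mathcal{H}}}\sqrt{R_{\rho\text{-var};[0,t]^{2}}}$. Hence the directional derivative is a bounded linear functional of $k\in\mathcal{H}$, which I denote $DF(\omega+h)$.

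It then remains to show that $h\mapsto DF(\omega+h)$ is continuous from $\mathcal{H}$ into $\mathcal{H}^{\ast}$, for a map between Banach spaces whose Gateaux derivative exists at every point and depends continuously (in operator norm) on the base point is continuously Fr\'{e}chet differentiable --- which is precisely the claim $F\in C_{\mathcal{H}}^{1}$. Here one uses that $h\mapsto T_{h}\mathbf{X}(\omega)$ is continuous from $\mathcal{H}$, indeed already from $C^{\rho\text{-var}}$ via the embedding of Proposition \ref{CM_pVar_embedding}, into the geometric $p$-rough paths equipped with their $p$-variation metric: its first level is $x+h$ and its second level is that of $\mathbf{X}(\omega)$ corrected by the Young integrals $\int x\,dh$, $\int h\,dx$ and $\int h\,dh$, all continuous in $h$. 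Lyons' continuity theorem for RDEs and their Jacobians then makes $h\mapsto(U_{\cdot\leftarrow 0}^{T_{h}\mathbf{X}(\omega)},J_{t\leftarrow\cdot}^{T_{h}\mathbf{X}(\omega)})$ continuous into $C^{p\text{-var}}$, so the integrand above varies continuously in $p$-variation with $h$; a final appeal to continuity of the Young integral, uniformly over the Hilbert unit ball of directions (again via Proposition \ref{CM_pVar_embedding}), gives $\left\Vert DF(\omega+h)-DF(\omega+h^{\prime})\right\Vert_{\mathcal{H}^{\ast}}\to 0$ as $h^{\prime}\to h$ in $\mathcal{H}$.

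The step I expect to carry the real weight is this last continuity statement, i.e. the passage from ``the Gateaux derivative exists and is given by Proposition \ref{VarOfConst}'' to genuine continuous Fr\'{e}chet differentiability: one must know that the translated flow and its Jacobian depend continuously, locally uniformly, on the Cameron--Martin perturbation, and this is exactly where the standing hypothesis $\rho<3/2$ is indispensable --- it is what keeps $T_{h}\mathbf{X}$ a Young-controllable perturbation and puts the quantitative embedding $\mathcal{H}\hookrightarrow C^{\rho\text{-var}}$ of Proposition \ref{CM_pVar_embedding} at our disposal for the uniform-in-direction estimates. The remaining ingredients --- the translation group law, local Lipschitz continuity of the It\^{o}--Lyons map and of the linear Jacobian RDE in the $p$-variation metric, and continuity of Young integration --- are standard facts of rough path theory.
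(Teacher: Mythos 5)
Your argument reproduces the paper's proof almost step for step: the Gateaux derivative at $\omega +h$ is identified through Proposition \ref{VarOfConst}, bounded as a linear functional on $\mathcal{H}$ by combining the Young estimate with the embedding of Proposition \ref{CM_pVar_embedding}, and its continuity in $h$ is obtained from continuity of $h\mapsto T_{h}\mathbf{X}\left( \omega \right) $ in the $p$-variation rough path metric together with Lyons' universal limit theorem; the conclusion then follows from the standard ``Gateaux derivative exists and is continuous implies continuously Fr\'{e}chet differentiable'' criterion. All of this matches the paper's proof.

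The one point you pass over too quickly is the identity $F\left( \omega +h\right) =U_{t\leftarrow 0}^{T_{h}\mathbf{X}\left( \omega \right) }\left( y_{0}\right) $, which you attribute to ``the Young translation formula recalled above''. That formula only defines the deterministic operator $T_{h}$ acting on a given rough path; it says nothing about how the random lift $\mathbf{X}$ behaves under the Wiener shift $\omega \mapsto \omega +h$. Since $\mathbf{X}$ is constructed as a limit in probability, $\mathbf{X}\left( \omega +h\right) $ is a priori only specified up to null sets that could depend on $h$. What is actually needed --- and what the paper proves as a separate step --- is that the event $\left\{ \omega :\mathbf{X}\left( \omega +h\right) \equiv T_{h}\mathbf{X}\left( \omega \right) \text{ for all }h\in \mathcal{H}\right\} $ (equation (\ref{SetOfFullMeasureProofOfHdiff})) has full measure, simultaneously in $h$; this follows from the construction of $\mathbf{X}$ as the limit of lifted piecewise linear approximations together with continuity of the Young cross-integrals. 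Without this step you have differentiated $h\mapsto U_{t\leftarrow 0}^{T_{h}\mathbf{X}\left( \omega \right) }\left( y_{0}\right) $ rather than $h\mapsto F\left( \omega +h\right) $, and the two need not coincide off that full-measure set. This is the only genuinely probabilistic ingredient of the proof and should be made explicit; otherwise your argument is complete and identical in structure to the paper's.
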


\begin{proof}
Choose $p>2\rho $ such that $1/p+1/\rho >1$. We may assume that $\mathbf{X}%
\left( \omega \right) $ has been defined so that $\mathbf{X}\left( \omega
\right) $ is a geometric $p$-rough path for every $\omega \in W$. Let us
also recall for $h\in \mathcal{H}\subset C^{\rho \text{-var}}$, the
translation $T_{h}\mathbf{X}\left( \omega \right) $ can be written (for $%
\omega $ fixed!) in terms of $\mathbf{X}\left( \omega \right) $ and
cross-integrals between $\pi _{1}\left( \mathbf{X}_{0,\cdot }\right) =:X\in
C^{p\text{-var}}$ and $h$. (These integrals are well-defined
Young-integrals.) Thanks to the definition of $\mathbf{X}\left( \omega
\right) $ as the limit in probability of piecewise linear approximations to $%
X$ and its iterated integrals (cf. \cite{FV07}) and basic continuity
properties of Young integrals we see that the event%
\begin{equation}
\left\{ \omega :\mathbf{X}\left( \omega +h\right) \equiv T_{h}\mathbf{X}%
\left( \omega \right) \text{ for all }h\in \mathcal{H}\right\} 
\label{SetOfFullMeasureProofOfHdiff}
\end{equation}%
has probability one. We show that $h\in $ $\mathcal{H}\mapsto U_{t\leftarrow
0}^{\mathbf{X}\left( \omega +h\right) }\left( y_{0}\right) $ is continuously
Fr\'{e}chet differentiable for every $\omega $ in the above set of full
measure. By basic facts of Fr\'{e}chet theory, we must show (a) Gateaux
differentiability and (b) continuity of the Gateaux differential.\newline
Ad (a): Using $\mathbf{X}\left( \omega +g+h\right) \equiv T_{g}T_{h}\mathbf{X%
}\left( \omega \right) $ for $g,h\in \mathcal{H}$ it suffices to show
Gateaux differentiability of $U_{t\leftarrow 0}^{\mathbf{X}\left( \omega
+\cdot \right) }\left( y_{0}\right) $ at $0\in \mathcal{H}$. \ For fixed $t$%
, define 
\begin{equation*}
Z_{i,s}\equiv J_{t\leftarrow s}^{\mathbf{X}}\left( V_{i}\left(
U_{s\leftarrow 0}^{\mathbf{X}}\right) \right) .
\end{equation*}%
Note that $s\mapsto Z_{i,s}$ is of finite $p$-variation. We have, with
implicit summation over $i$, 
\begin{eqnarray*}
\left\vert D_{h}U_{t\leftarrow 0}^{\mathbf{X}}\left( y_{0}\right)
\right\vert  &=&\left\vert \int_{0}^{t}J_{t\leftarrow s}^{\mathbf{X}}\left(
V_{i}\left( U_{s\leftarrow 0}^{\mathbf{X}}\right) \right)
dh_{s}^{i}\right\vert  \\
&=&\left\vert \int_{0}^{t}Z_{i}dh^{i}\right\vert  \\
&\leq &c\left( \left\vert Z\right\vert _{p-var}+\left\vert Z\left( 0\right)
\right\vert \right) \times \left\vert h\right\vert _{\rho -var} \\
&\leq &c\left( \left\vert Z\right\vert _{p-var}+\left\vert Z\left( 0\right)
\right\vert \right) \times \left\vert h\right\vert _{\mathcal{H}}.
\end{eqnarray*}%
Hence, the linear map $DU_{t\leftarrow 0}^{\mathbf{X}}\left( y_{0}\right)
:h\mapsto D_{h}U_{t\leftarrow 0}^{\mathbf{X}}\left( y_{0}\right) \in \mathbb{%
R}^{e}$ is bounded and each component is an element of $\mathcal{H}^{\ast }$%
. We just showed that%
\begin{equation*}
h\mapsto \left\{ \frac{d}{d\varepsilon }U_{t\leftarrow 0}^{T_{\varepsilon h}%
\mathbf{X}\left( \omega \right) }\left( y_{0}\right) \right\} _{\varepsilon
=0}=\left\langle DU_{t\leftarrow 0}^{\mathbf{X}\left( \omega \right) }\left(
y_{0}\right) ,h\right\rangle _{\mathcal{H}}
\end{equation*}%
and hence%
\begin{equation*}
h\mapsto \left\{ \frac{d}{d\varepsilon }U_{t\leftarrow 0}^{\mathbf{X}\left(
\omega +\varepsilon h\right) }\left( y_{0}\right) \right\} _{\varepsilon
=0}=\left\langle DU_{t\leftarrow 0}^{\mathbf{X}\left( \omega \right) }\left(
y_{0}\right) ,h\right\rangle _{\mathcal{H}}
\end{equation*}%
emphasizing again that $\mathbf{X}\left( \omega +h\right) \equiv T_{h}%
\mathbf{X}\left( \omega \right) $ almost surely for all $h\in \mathcal{H}$
simultaneously. Repeating the argument with $T_{g}\mathbf{X}\left( \omega
\right) =\mathbf{X}\left( \omega +g\right) $ shows that the Gateaux
differential of $U_{t\leftarrow 0}^{\mathbf{X}\left( \omega +\cdot \right) }$
at $g\in \mathcal{H}$ is given by%
\begin{equation*}
DU_{t\leftarrow 0}^{\mathbf{X}\left( \omega +g\right) }=DU_{t\leftarrow
0}^{T_{g}\mathbf{X}\left( \omega \right) }.
\end{equation*}%
(b) It remains to be seen that $g\in \mathcal{H}\mapsto DU_{t\leftarrow
0}^{T_{g}\mathbf{X}\left( \omega \right) }\in L\left( \mathcal{H},\mathbb{R}%
^{e}\right) $, the space of linear bounded maps equipped with operator norm,
is continuous. To this end, assume $g_{n}\rightarrow _{n\rightarrow \infty }g
$ in $\mathcal{H}$ (and hence in $C^{\rho \text{-var}}$). Continuity
properties of the Young integral imply continuity of the translation
operator viewed as map $h\in C^{\rho \text{-var}}\mapsto T_{h}\mathbf{X}%
\left( \omega \right) $ as $p$-rough path (see \cite{LQ02}) and so 
\begin{equation*}
T_{g_{n}}\mathbf{X}\left( \omega \right) \rightarrow T_{g}\mathbf{X}\left(
\omega \right) 
\end{equation*}%
in $p$-variation rough path metric. To point here is that%
\begin{equation*}
\mathbf{x}\mapsto J_{t\leftarrow \cdot }^{\mathbf{x}}\text{ and }%
J_{t\leftarrow \cdot }^{\mathbf{x}}\left( V_{i}\left( U_{\cdot \leftarrow
0}^{\mathbf{x}}\right) \right) \in C^{p\text{-var}}
\end{equation*}%
depends continuously on $\mathbf{x}$ with respect to $p$-variation rough
path metric: using the fact that $J_{t\leftarrow \cdot }^{\mathbf{x}}$ and $%
U_{\cdot \leftarrow 0}^{\mathbf{x}}$ both satisfy rough differential
equations driven by $\mathbf{x}$ this is just a consequence of Lyons' limit
theorem (the \textit{universal limit theorem} of rough path theory). We
apply this with $\mathbf{x}=\mathbf{X}\left( \omega \right) $ where $\omega $
remains a fixed element in (\ref{SetOfFullMeasureProofOfHdiff}). It follows
that%
\begin{equation*}
\left\Vert DU_{t\leftarrow 0}^{T_{g_{n}}\mathbf{X}\left( \omega \right)
}-DU_{t\leftarrow 0}^{T_{g}\mathbf{X}\left( \omega \right) }\right\Vert
_{op}=\sup_{h:\left\vert h\right\vert _{\mathcal{H}}=1}\left\vert
D_{h}U_{t\leftarrow 0}^{T_{g_{n}}\mathbf{X}\left( \omega \right)
}-D_{h}U_{t\leftarrow 0}^{T_{g}\mathbf{X}\left( \omega \right) }\right\vert 
\end{equation*}%
and defining $Z_{i}^{g}\left( s\right) \equiv J_{t\leftarrow s}^{T_{g}%
\mathbf{X}\left( \omega \right) }\left( V_{i}\left( U_{s\leftarrow 0}^{T_{g}%
\mathbf{X}\left( \omega \right) }\right) \right) $, and similarly $%
Z_{i}^{g_{n}}\left( s\right) $, the same reasoning as in part (a) leads to
the estimate 
\begin{equation*}
\left\Vert DU_{t\leftarrow 0}^{T_{g_{n}}\mathbf{X}\left( \omega \right)
}-DU_{t\leftarrow 0}^{T_{g}\mathbf{X}\left( \omega \right) }\right\Vert
_{op}\leq c\left( \left\vert Z^{g_{n}}-Z^{g}\right\vert _{p\text{-var}%
}+\left\vert Z^{g_{n}}\left( 0\right) -Z^{g}(0)\right\vert \right) .
\end{equation*}%
From the explanations just given this tends to zero as $n\rightarrow \infty $
which establishes continuity of the Gateaux differential, as required, and
the proof is finished.
\end{proof}

\begin{definition}
\cite{Sh, Nu06, Ma} Given a continuously $\mathcal{H}$-differentiable%
\footnote{$D_{loc}^{1,p}$ is enough to guarantee the existence of an $%
\mathcal{H}$-valued derivative.} r.v. $F=\left( F^{1},...,F^{e}\right)
:W\rightarrow \mathbb{R}^{e}$ the Malliavin covariance matrix as the random
matrix given by%
\begin{equation*}
\sigma \left( \omega \right) :=\left( \left\langle
DF^{i},DF^{j}\right\rangle _{\mathcal{H}}\right) _{i,j=1,...,e}\in \mathbb{R}%
^{e\times e}.
\end{equation*}%
We call $F$ weakly non-degenerate if $\det \left( \sigma \right) \neq 0$
almost surely.
\end{definition}

We now give an integral representation of the Malliavin covariance matrix of 
$Y_{t}\equiv U_{t\leftarrow 0}^{\mathbf{X}\left( \omega \right) }\left(
y_{0}\right) $, the solution to the RDE driven by $\mathbf{X}\left( \omega
\right) $, the lift of $\left( X^{1},...,X^{d}\right) $, along vector fields 
$\left( V_{1},...,V_{d}\right) $, in terms of 2D\ Young integrals \cite{Yo,
To, FV07}.

\begin{proposition}
Let $\sigma _{t}=\left( \left\langle DY_{t}^{i},DY_{t}^{j}\right\rangle _{%
\mathcal{H}}:i,j=1,...,e\right) $ denote the Malliavin covariance matrix of $%
Y_{t}\equiv U_{t\leftarrow 0}^{\mathbf{X}\left( \omega \right) }\left(
y_{0}\right) $, the RDE solution of $dY=V\left( Y\right) d\mathbf{X}\left(
\omega \right) $. In the notation of section \ref{SectionRDEs} we have 
\begin{equation*}
\left( \left\langle DY_{t}^{i},DY_{t}^{j}\right\rangle _{\mathcal{H}}\right)
_{i,j=1,...,e}=\sum_{k=1}^{d}\int_{0}^{t}\int_{0}^{t}J_{t\leftarrow s}^{%
\mathbf{X}}\left( V_{k}\left( Y_{s}\right) \right) \otimes J_{t\leftarrow
s^{\prime }}^{\mathbf{X}}\left( V_{k}\left( Y_{s^{\prime }}\right) \right)
dR^{\left( k\right) }\left( s,s^{\prime }\right)
\end{equation*}
\end{proposition}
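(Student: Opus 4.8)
The plan is to feed the variation-of-constants formula of Proposition~\ref{VarOfConst} into the definition of the Malliavin covariance, reduce to a single Gaussian component using independence of the $X^k$, and then identify the $\mathcal H$-inner product of two Riesz representatives of Young functionals with a $2$D Young integral against the covariance.

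First fix $p\in(2\rho,3)$ with $1/p+1/\rho>1$; such a $p$ exists since $\rho<3/2$ forces $3/(2\rho)>1$. By Proposition~\ref{VarOfConst}, for every $h\in\mathcal H\hookrightarrow C^{\rho\text{-var}}$ (Proposition~\ref{CM_pVar_embedding}),
\begin{equation*}
D_hY_t^i=\sum_{k=1}^d\int_0^t Z_k^i(s)\,dh_s^k,\qquad Z_k(s):=J_{t\leftarrow s}^{\mathbf X}\bigl(V_k(Y_s)\bigr),
\end{equation*}
where $s\mapsto Z_k(s)\in\mathbb R^e$ has finite $p$-variation, being assembled from the RDE flow and its Jacobian (both of finite $p$-variation) together with the Lipschitz vector field $V_k$; by Young's inequality and the estimate of Proposition~\ref{CM_pVar_embedding} this functional is bounded on $\mathcal H$. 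Because $X$ has independent components, $\mathcal H=\bigoplus_{k=1}^d\mathcal H^{(k)}$ is an orthogonal decomposition; writing $DY_t^i=\sum_k(DY_t^i)^{(k)}$ with $(DY_t^i)^{(k)}\in\mathcal H^{(k)}$, the displayed formula says precisely that $(DY_t^i)^{(k)}$ is the Riesz representative in $\mathcal H^{(k)}$ of $h\mapsto\int_0^t Z_k^i(s)\,dh_s$.

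The heart of the proof is the claim that, for a single component with covariance $R=R^{(k)}$ and for $f,g$ of finite $p$-variation, the Riesz representatives $\eta^f,\eta^g\in\mathcal H^{(k)}$ of $h\mapsto\int_0^t f\,dh$ and $h\mapsto\int_0^t g\,dh$ satisfy
\begin{equation*}
\bigl\langle\eta^f,\eta^g\bigr\rangle_{\mathcal H^{(k)}}=\int_0^t\!\!\int_0^t f(s)\,g(s')\,dR(s,s'),
\end{equation*}
the right-hand side being a $2$D Young integral, well defined because $R$ has finite $2$D $\rho$-variation, $f\otimes g$ has finite $2$D $p$-variation, and $1/p+1/\rho>1$. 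I would establish this first for step functions: using that the element $\kappa_u\in\mathcal H^{(k)}$ with $\langle\kappa_u,h\rangle_{\mathcal H^{(k)}}=h_u$ has $\xi(\kappa_u)=X_u^{(k)}$ and hence $\langle\kappa_u,\kappa_{u'}\rangle_{\mathcal H^{(k)}}=R(u,u')$, one finds that for a step function $f$ the representative $\eta^f$ is the corresponding finite linear combination of increments of $\kappa$, so that $\langle\eta^f,\eta^g\rangle$ is literally the Riemann--Stieltjes double sum $\sum_{i,j}f_ig_j\,\mathbb E\bigl(X^{(k)}_{t_i,t_{i+1}}X^{(k)}_{s_j,s_{j+1}}\bigr)$ approximating the $2$D integral. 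Then I would approximate $f$ and $g$ in $p$-variation by step functions and pass to the limit, controlling $\eta^{f_n}\to\eta^f$ in $\mathcal H^{(k)}$ via Young's inequality and Proposition~\ref{CM_pVar_embedding}, and controlling the right-hand side via continuity of the $2$D Young integral.

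Granting the claim, summing over the independent components and over the indices gives
\begin{equation*}
\bigl\langle DY_t^i,DY_t^j\bigr\rangle_{\mathcal H}=\sum_{k=1}^d\bigl\langle\eta^{Z_k^i},\eta^{Z_k^j}\bigr\rangle_{\mathcal H^{(k)}}=\sum_{k=1}^d\int_0^t\!\!\int_0^t Z_k^i(s)\,Z_k^j(s')\,dR^{(k)}(s,s'),
\end{equation*}
and reassembling the $(i,j)$ entries into a matrix turns the products $Z_k^i(s)Z_k^j(s')$ into the tensor $J_{t\leftarrow s}^{\mathbf X}(V_k(Y_s))\otimes J_{t\leftarrow s'}^{\mathbf X}(V_k(Y_{s'}))$, which is the asserted identity. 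The main obstacle is the displayed claim on $\langle\eta^f,\eta^g\rangle$: the $p$-variation approximation must be run so that it is simultaneously compatible with the one-dimensional Young integrals defining the representatives and with the two-dimensional Young integral on the right, and one must keep the orthogonal sum over components straight; everything else is bookkeeping once Propositions~\ref{VarOfConst} and~\ref{CM_pVar_embedding} are in hand.
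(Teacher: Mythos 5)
Your proposal is correct and follows the same overall skeleton as the paper --- feed Proposition~\ref{VarOfConst} into the definition of $\sigma_t$, use the orthogonal decomposition $\mathcal{H}=\oplus_k\mathcal{H}^{(k)}$ coming from independence of components, and reduce everything to the identification of an $\mathcal{H}^{(k)}$-inner product of Young functionals with a 2D Young integral against $R^{(k)}$ --- but you prove that key identification by a genuinely different route. The paper expands $\sigma_t$ over an orthonormal basis $(h_n^{(k)})$ via Parseval and then invokes the identity $\sum_n\int_0^T f\,dh_n\int_0^T g\,dh_n=\int_0^T\int_0^T f(s)g(t)\,dR(s,t)$, justified through the $L^2$-expansion $X(t)=\sum_n\xi(h_n)h_n(t)$; this is compact but leaves the interchange of the sum over $n$ with the Young integrals implicit. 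You instead work with Riesz representatives and the reproducing-kernel elements $\kappa_u$ satisfying $\langle\kappa_u,\kappa_{u'}\rangle=R(u,u')$, verify the identity exactly for step functions as a Riemann--Stieltjes double sum, and pass to the limit; this is more hands-on and makes the analytic content explicit. The one technical wrinkle in your version is the phrase ``approximate $f$ and $g$ in $p$-variation by step functions'': a continuous path of finite $p$-variation is generally not a $p$-variation-norm limit of step functions, so you should either approximate in $p'$-variation for some $p'>p$ still satisfying $1/p'+1/\rho>1$ (which is possible since the inequality is strict), or argue directly via convergence of the Riemann sums defining the one- and two-dimensional Young integrals. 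With that adjustment the argument is complete and, if anything, more self-contained than the paper's on the crucial step.
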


\begin{proof}
Let $\left( h_{n}^{\left( k\right) }:n\right) $ be an ONB\ of $\mathcal{H}%
^{\left( k\right) }$. It follows that $\left( h_{n}^{\left( k\right)
}:n=1,2,...;k=1,...,d\right) $ is an ONB\ of $\mathcal{H}=\oplus _{i=1}^{d}%
\mathcal{H}^{\left( k\right) }$ where we identify%
\begin{equation*}
h_{n}^{\left( 1\right) }\in \mathcal{H}^{\left( 1\right) }\equiv \left( 
\begin{array}{c}
h_{n}^{\left( 1\right) } \\ 
0 \\ 
... \\ 
0%
\end{array}%
\right) \in \mathcal{H}
\end{equation*}%
and similarly for $k=2,...,d$. From Parseval's identity,%
\begin{eqnarray*}
\sigma _{t} &=&\left( \left\langle DY_{t}^{i},DY_{t}^{j}\right\rangle _{%
\mathcal{H}}\right) _{i,j=1,...,e} \\
&=&\sum_{n,k}\left\langle DY_{t},h_{n}^{\left( k\right) }\right\rangle _{%
\mathcal{H}}\otimes \left\langle DY_{t},h_{n}^{\left( k\right)
}\right\rangle _{\mathcal{H}} \\
&=&\sum_{k}\sum_{n}\int_{0}^{t}J_{t\leftarrow s}^{\mathbf{X}}\left(
V_{k}\left( Y_{s}\right) \right) dh_{n,s}^{\left( k\right) }\otimes
\int_{0}^{t}J_{t\leftarrow s}^{\mathbf{X}}\left( V_{k}\left( Y_{s}\right)
\right) dh_{n,s}^{\left( k\right) } \\
&=&\sum_{k}\int_{0}^{t}\int_{0}^{t}J_{t\leftarrow s}^{\mathbf{X}}\left(
V_{k}\left( Y_{s}\right) \right) \otimes J_{t\leftarrow s^{\prime }}^{%
\mathbf{X}}\left( V_{k}\left( Y_{s^{\prime }}\right) \right) dR^{\left(
k\right) }\left( s,s^{\prime }\right) .
\end{eqnarray*}%
For the last step we used that%
\begin{equation*}
\sum_{n}\int_{0}^{T}fdh_{n}\int_{0}^{T}gdh_{n}=\int_{0}^{T}\int_{0}^{T}f%
\left( s\right) g\left( t\right) dR\left( s,t\right)
\end{equation*}%
whenever $f=f\left( t,\omega \right) $ and $g$ are such that the integrals
are a.s. well-defined Young-integrals. The proof is a consequence of $%
R\left( s,t\right) =\mathbb{E}\left( X_{s}X_{t}\right) $ and the $L^{2}$%
-expansion of the Gaussian process $X$,%
\begin{equation*}
X\left( t\right) =\sum_{n}\xi \left( h_{n}\right) h_{n}\left( t\right)
\end{equation*}%
where $\xi \left( h_{n}\right) $ form an IID family of standard Gaussians.
\end{proof}

\bigskip

Two special cases are worth considering: in the case of Brownian motion $%
dR\left( s,s^{\prime }\right) $ is a Dirac measure on the diagonal $\left\{
s=s^{\prime }\right\} $) and the double integral reduces to a (well-known)
single integral expression; in the case of fractional Brownian motion with $%
H>1/2$ it suffices to take the $\partial ^{2}/\left( \partial s\partial
t\right) $ derivative of $R_{H}\left( s,t\right) =\left(
t^{2H}+s^{2H}-\left\vert t-s\right\vert ^{2H}\right) /2$ to see that%
\begin{equation*}
dR_{H}\left( s,s^{\prime }\right) \sim \left\vert t-s\right\vert ^{2H-2}dsdt
\end{equation*}%
which is integrable iff $2H-2>-1$ or $H>1/2$. (The resulting double-integral
representation of the Malliavin covariance is also well-known and appears,
for instance, in \cite{NuSa, NuHu, BH}.)

\section{Existence of a density for Gaussian RDEs}

We remain in the framework of the previous sections, $Y_{t}\left( \omega
\right) \equiv U_{t\leftarrow 0}^{\mathbf{X}\left( \omega \right) }\left(
y_{0}\right) $ denotes the (random)\ RDE solution driven a Gaussian rough
path $\mathbf{X}$, the natural lift of a continuous, centered Gaussian
process with independent components $X=\left( X^{1},...,X^{d}\right) $
started at zero. Under the \textbf{standing assumption} of finite $\rho $%
-variation of the covariance, $\rho <3/2$, we know that $\mathbf{X}\left(
\omega \right) $ is a.s. a geometric $p$-rough path for $p\in \left( 2\rho
,3\right) $. Recall that this means that $\mathbf{X}$ can be viewed as path
in $G^{2}\left( \mathbb{R}^{d}\right) $, the step-$2$ nilpotent group over $%
\mathbb{R}^{d}$, of finite $p$-variation relative to the Carnot-Caratheodory
metric on $G^{2}\left( \mathbb{R}^{d}\right) $.

\begin{condition}
\label{EllipVF}\textbf{Ellipticity assumption on the vector fields:} The
vector fields $V_{1},...,V_{d}$ span the tangent space at $y_{0}$.
\end{condition}

\begin{condition}
\label{NonDegGauss}\textbf{Non-degeneracy of the Gaussian process on }$\left[
0,T\right] $\textbf{:} Fix $T>0$. We assume that for any smooth $f=\left(
f_{1},...,f_{d}\right) :\left[ 0,T\right] \rightarrow \mathbb{R}^{d}$%
\begin{equation*}
\left( \int_{0}^{T}fdh\equiv \sum_{k=1}^{d}\int_{0}^{T}f_{k}dh^{k}=0\forall
h\in \mathcal{H}\right) \implies f\equiv 0.
\end{equation*}
\end{condition}

Note that non-degeneracy on $\left[ 0,T\right] $ implies non-degeneracy on $%
\left[ 0,t\right] $ for any $t\in (0,T]$. It is instructive to see how this
condition rules out the Brownian bridge returning to the origin at time $T$
or earlier; a Brownian bridge which returns to zero after time $T$ is
allowed. The following lemma contains a few ramifications concerning
condition \ref{NonDegGauss}. Since $\mathcal{H}=\oplus _{k=1}^{d}\mathcal{H}%
^{\left( k\right) }$ there is no loss in generality in assuming $d=1$.

\begin{lemma}
\label{LemmaNDG}(i) The requirement that $f$ is smooth above can be relaxed
to $f\in C^{p\text{-var}}$ for $p>2\rho $ small enough.\newline
(ii) The requirement that $\int fdh=0\forall h\in \mathcal{H}$ can be
relaxed to the the quantifier "for all $h$ in some orthonormal basis of $%
\mathcal{H}$".\newline
(iii) The non-degeneracy condition \ref{NonDegGauss} is equivalent to saying
that for all smooth $f\neq 0$, the zero-mean Gaussian random variable $%
\int_{0}^{T}fdX$ (which exists as Young integral or via
integration-by-parts) has positive definite variance.)\newline
(iv) The non-degeneracy condition \ref{NonDegGauss} is equivalent to saying
that for all times $0<t_{1}<...<t_{n}<T$ the covariance matrix of $\left(
X_{t_{1}},...,X_{t_{n}}\right) $, that is,%
\begin{equation*}
\left( R\left( t_{i},t_{j}\right) \right) _{i,j=1,...,d}
\end{equation*}%
is (strictly) positive definite. \newline
\end{lemma}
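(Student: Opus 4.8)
The plan is to reduce all four parts to a single identity. Taking $d=1$ as suggested, write $h_t=\mathbb{E}(X_t\,\xi(h))$ for $h\in\mathcal{H}$, and for $f$ in the relevant regularity class set $Z_f:=\int_0^T f\,dX$, the centered Gaussian variable defined either as a Young integral or through integration by parts, $Z_f=f(T)X_T-\int_0^T X\,df$. Approximating $\int_0^T f\,dh$ by Riemann--Stieltjes sums and interchanging the sum with $\mathbb{E}(\xi(h)\cdot)$ — legitimate because the sums converge in $L^2$ once the $p$-variation of $f$ and the $\rho$-variation of $R$ are compatible — yields
\begin{equation*}
\int_0^T f\,dh=\mathbb{E}\!\left(\xi(h)\,Z_f\right)=\langle h,\hat h_f\rangle_{\mathcal{H}},
\end{equation*}
where $\hat h_f\in\mathcal{H}$ is the element represented by $Z_f$, i.e. $(\hat h_f)_t=\mathbb{E}(X_t Z_f)$; in particular $\langle\hat h_f,\hat h_f\rangle_{\mathcal{H}}=\mathbb{E}(Z_f^2)=\int_0^T\int_0^T f(s)f(u)\,dR(s,u)$. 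Since $Z_f$ lies in $\mathcal{H}_1=\overline{\mathrm{span}}\{X_t\}$ and $\xi\colon\mathcal{H}\to\mathcal{H}_1$ is an isometry, this gives the chain of equivalences: $\int_0^T f\,dh=0$ for all $h\in\mathcal{H}$ $\iff$ $\hat h_f=0$ $\iff$ $Z_f=0$ a.s. $\iff$ $\mathrm{Var}(Z_f)=0$.

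Part (iii) is then immediate: Condition \ref{NonDegGauss} says exactly that $\mathrm{Var}(Z_f)>0$ for every smooth $f\neq0$ (and $\mathrm{Var}(Z_f)\geq0$ always), while $Z_f=\int_0^T f\,dX$ is precisely the Gaussian variable named in the statement. For (ii): Young's inequality together with Proposition \ref{CM_pVar_embedding} gives $\left|\int_0^T f\,dh\right|\leq c\,(|f|_{p\text{-var}}+|f(0)|)\,|h|_{\rho\text{-var}}\leq C\,|h|_{\mathcal{H}}$, so $h\mapsto\int_0^T f\,dh$ is a bounded linear functional on $\mathcal{H}$ with Riesz representative $\hat h_f$; its coordinates in any orthonormal basis $(e_n)$ are $\langle\hat h_f,e_n\rangle_{\mathcal{H}}=\int_0^T f\,de_n$, so these all vanishing forces $\hat h_f=0$ and hence $\int_0^T f\,dh=0$ for every $h$ (the reverse implication being trivial).

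For (i) the substantive claim is that Condition \ref{NonDegGauss} for smooth $f$ forces the same conclusion for $f\in C^{p\text{-var}}$ with $p>2\rho$ small enough that $1/p+1/\rho>1$ (which exists since $\rho<3/2$). Given such an $f$ with $\int_0^T f\,dh=0$ for all $h$, hence $Z_f=0$, I would approximate $f$ by smooth $f_n\to f$ uniformly with $\sup_n|f_n|_{p\text{-var}}<\infty$ and use continuity of Young integration under that mode of convergence to propagate the relevant estimates down to the smooth level, concluding $f\equiv0$. For (iv) the point is to go between $\int_0^T f\,dX$ and the finite-dimensional marginals of $X$: writing $Z_f$ as an $L^2$-limit of Riemann sums $\sum_i f(t_i)X_{t_i,t_{i+1}}$ and re-summing by parts into $\sum_j c_j X_{t_j}$ shows that positive-definiteness of all the matrices $(R(t_i,t_j))$ forces $\mathrm{Var}(Z_f)>0$ for $f\neq0$ and hence Condition \ref{NonDegGauss} by (iii); conversely a singular matrix $(R(t_i,t_j))$ produces a non-trivial relation $\sum a_i X_{t_i}=0$, which one transfers to a non-zero $f$ annihilated by all of $\mathcal{H}$ by smoothing the associated piecewise-constant function, contradicting Condition \ref{NonDegGauss}.

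The main obstacle is exactly this passage in (i) and (iv) between the Cameron--Martin / smooth-function formulation and its finite-dimensional shadow: one must keep the approximating $f$'s $p$-variation bounded and prevent the number of sampling points (or the norm of the inverse covariance) from escaping to infinity, so that a degeneracy visible for a rough or discrete integrand genuinely persists for a smooth one and conversely. The continuity estimates for Young integration and the finite $\rho$-variation of $R$ (via Proposition \ref{CM_pVar_embedding}) are the tools I would use, and making those bounds uniform is where the real work lies.
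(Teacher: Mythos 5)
Your central identity $\int_0^T f\,dh=\mathbb{E}\bigl(\xi(h)\,Z_f\bigr)$ with $Z_f=\int_0^T f\,dX$, and the chain of equivalences it yields, is exactly the mechanism of the paper's proof. Your part (iii) (isometry of $\xi$ onto the first chaos, so $\int f\,dh=0$ for all $h$ iff $\mathrm{Var}(Z_f)=0$) matches the paper's argument, which uses the same first-chaos observation plus Cameron--Martin for the converse. Your part (ii) (boundedness of $h\mapsto\int f\,dh$ via Young's inequality and Proposition \ref{CM_pVar_embedding}, then Riesz representation and coordinates in an orthonormal basis) is the same argument the paper runs by expanding $h$ in the basis and using convergence in $\rho$-variation. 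These two parts are correct and essentially identical to the paper.

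The gaps are in (i) and (iv), exactly where you say the real work lies, and your sketches do not close them. For (i): given $f\in C^{p\text{-var}}$ with $\int_0^T f\,dh=0$ for all $h$, approximating by smooth $f_n$ only gives $\int_0^T f_n\,dh\to 0$, not $\int_0^T f_n\,dh=0$, so the smooth-case hypothesis of Condition \ref{NonDegGauss} can never be applied to any $f_n$; you would need a quantitative lower bound on $\mathrm{Var}(Z_{f_n})$ that survives the limit, and none is supplied. For (iv), the implication you assert -- that positive definiteness of every matrix $\bigl(R(t_i,t_j)\bigr)$ with $t_i<T$ forces $\mathrm{Var}(Z_f)>0$ for $f\neq 0$ -- is false without uniform control on the inverse covariances: for the Brownian bridge pinned at time $T$ every such matrix is strictly positive definite, yet $\mathrm{Var}\bigl(\int_0^T 1\,dX\bigr)=\mathrm{Var}(X_T)=0$; each Riemann sum has positive variance but the $L^2$-limit degenerates. (This shows the sampling times must be allowed to reach $T$ for the stated equivalence to have a chance, and that ``keeping the norm of the inverse covariance from escaping to infinity'' is not a technicality but the whole content.) In fairness, the paper's own proof of (i) is a one-line appeal to Young continuity and weak compactness of the unit ball of $\mathcal{H}$, and its proof of (iv) records only the Parseval identity $\int_0^T\!\!\int_0^T f(s)f(t)\,dR(s,t)=\sum_n\bigl|\int_0^T f\,dh_n\bigr|^2$ without ever touching the finite-dimensional marginals (and then refers to nonexistent parts (v) and (vi)), so these items are left essentially unproved there as well; but your proposal, as written, would not repair them.
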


\begin{proof}
(i) Continuity properties of the Young integral, noting that $1/p+1/\rho >1$
for $p>2\rho $ small enough, and weak compactness of the unit ball in $%
\mathcal{H}$. (ii) Any $h\in \mathcal{H}$ can be written as limit in $%
\mathcal{H}$ of $h^{\left[ n\right] }\equiv \sum_{k=1}^{n}\left\langle
h_{k},h\right\rangle h_{k}$ as $n\rightarrow \infty $ when $\left(
h_{k}\right) $ is an ONB\ for $\mathcal{H}$. From Proposition \ref%
{CM_pVar_embedding} it follows that $h$ is also the limit of $h^{\left[ n%
\right] }$ in $\rho $-variation topology. Conclude with continuity of the
Young integral. (iii) Every element $h\in \mathcal{H}$ can be written as $%
h_{t}=\mathbb{E}\left( ZX_{t}\right) $ with $Z\in \xi \left( \mathcal{H}%
\right) $. By taking $L^{2}$-limits one easily justifies the formal
computation%
\begin{equation*}
dh_{t}=\mathbb{E}\left( \dot{X}_{t}\,Z\right) dt\implies \int_{0}^{T}fdh=%
\mathbb{E}\left[ \left( \int_{0}^{T}fdX\right) \,Z\right] 
\end{equation*}%
and our condition is equivalent to saying that%
\begin{equation*}
\int_{0}^{T}fdX\perp \xi \left( \mathcal{H}\right) .
\end{equation*}%
On the other hand, it is clear that $\int fdX$ itself is an element of the
1st Wiener It\^{o} Chaos $\xi \left( \mathcal{H}\right) $ and so must be $0$
in $L^{2}$. In other words, saying that $\int fdh=0$ for all $h\in $ $%
\mathcal{H}$ says precisely that%
\begin{equation*}
\text{Var}\left[ \int_{0}^{T}fdX\right] =\left\vert
\int_{0}^{T}fdX\right\vert _{L^{2}}^{2}=0\text{.}
\end{equation*}%
Conversely, assume that%
\begin{equation*}
\text{Var}\left[ \int_{0}^{T}fdX\right] =0\text{.}
\end{equation*}%
Then $\int fdX=0$ with probability $1$ and by the Cameron-Martin theorem,
for any $h\in \mathcal{H}$,%
\begin{equation*}
\int_{0}^{T}fd\left( X+h\right) =0
\end{equation*}%
with probability one which implies $\int fdh=0$ for all $h\in $ $\mathcal{H}$%
. (iv) Suffices to note that%
\begin{equation*}
\int_{0}^{T}\int_{0}^{T}f\left( s\right) f\left( t\right) dR\left(
s,t\right) =\sum_{n}\left\vert \int_{0}^{T}f\left( s\right)
dh_{n}\right\vert ^{2}
\end{equation*}%
For the last step we used that%
\begin{equation*}
\sum_{n}\int_{0}^{T}fdh_{n}\int_{0}^{T}gdh_{n}=\int_{0}^{T}\int_{0}^{T}f%
\left( s\right) g\left( t\right) dR\left( s,t\right) 
\end{equation*}%
which is a consequence of $R\left( s,t\right) =\mathbb{E}\left(
X_{s}X_{t}\right) $ and the $L^{2}$-expansion $X\left( t\right) =\sum_{n}\xi
\left( h_{n}\right) h_{n}\left( t\right) $. (v) Left to the reader. (vi)
Obvious.
\end{proof}

\begin{remark}
The variance of $\int fdX$ can written as 2D Young integral,%
\begin{equation*}
\int_{\left[ 0,T\right] ^{2}}f_{s}f_{t}dR\left( s,t\right) .
\end{equation*}
\end{remark}

To put the following result in context, recall that the covariance of
Brownian motion, $\left( s,t\right) \mapsto \min \left( s,t\right) $, has
finite $1$-variation with $\rho =1$. For fractional Brownian motion with
Hurst parameter $H$ one can take $\rho =1/2H$. The following result then
applies to fractional Brownian driving signals with $H>1/3$.

\begin{theorem}
\label{ThmEllDens}Let $\mathbf{X}$ be natural lift of a continuous, centered
Gaussian process with independent components $X=\left(
X^{1},...,X^{d}\right) $, with finite $\rho $-variation of the covariance, $%
\rho <3/2$ and non-degenerate in the sense of condition \ref{NonDegGauss}.
Let $V=\left( V_{1},...,V_{d}\right) $ be a collection of $Lip^{3}$-vector
fields on $\mathbb{R}^{e}$ which satisfy the ellipticity condition \ref%
{EllipVF}. Then the solution to the (random) RDE%
\begin{equation*}
dY=V(Y)d\mathbf{X},\,Y\left( 0\right) =y_{0}\in \mathbb{R}^{e}
\end{equation*}%
admits a density at all times $t\in (0,T]$ with respect to Lebesgue measure
on $\mathbb{R}^{e}$.
\end{theorem}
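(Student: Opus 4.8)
The plan is to obtain the density from the Bouleau--Hirsch absolute continuity criterion, so the whole argument reduces to two ingredients, one of which is already available. The Malliavin regularity is in hand: by Proposition \ref{GateauxFrechet} the functional $\omega \mapsto Y_t$ is continuously $\mathcal{H}$-differentiable, hence $\mathbb{D}^{1,2}_{loc}$-regular by the remark following the definition of $C^1_{\mathcal{H}}$-regularity, which is exactly the setting in which the localized Bouleau--Hirsch criterion applies. Thus it suffices to show that $Y_t$ is weakly non-degenerate, i.e. that its Malliavin covariance matrix $\sigma_t$ is almost surely invertible.

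To prove invertibility I would fix $\omega$ in the full-measure set on which $\mathbf{X}(\omega)$ is a geometric $p$-rough path and the conclusions of the earlier propositions hold, choosing $p\in(2\rho,3)$ small enough that $1/p+1/\rho>1$ (possible precisely because $\rho<3/2$). Given $v\in\mathbb{R}^e$ with $v^T\sigma_t v=0$, I would feed this into the $2$D Young integral representation of $\sigma_t$ established above: setting $f_k(s):=\langle v,\,J^{\mathbf{X}}_{t\leftarrow s}(V_k(Y_s))\rangle$, which is of finite $p$-variation on $[0,t]$ by the remark in the proof of Proposition \ref{GateauxFrechet}, one gets
\[
0 \;=\; v^T\sigma_t v \;=\; \sum_{k=1}^d \int_0^t\!\!\int_0^t f_k(s)\,f_k(s')\,dR^{(k)}(s,s').
\]
Then I would invoke the Parseval-type identity used in the proof of Lemma \ref{LemmaNDG}(iv) to rewrite each summand as $\sum_n \bigl|\int_0^t f_k\,dh_n^{(k)}\bigr|^2 \ge 0$, for an orthonormal basis $(h_n^{(k)})_n$ of $\mathcal{H}^{(k)}$; since the total sum vanishes, every term vanishes, so $\int_0^t f_k\,dh_n^{(k)}=0$ for all $n$ and all $k=1,\dots,d$.

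Next I would upgrade this to $f_k\equiv 0$. By Lemma \ref{LemmaNDG}(ii) the vanishing along a basis gives $\int_0^t f_k\,dh=0$ for every $h\in\mathcal{H}^{(k)}$, and then the $C^{p\text{-var}}$ form of non-degeneracy (Lemma \ref{LemmaNDG}(i)), applied on $[0,t]$ — which is legitimate since Condition \ref{NonDegGauss} on $[0,T]$ implies non-degeneracy on $[0,t]$ — forces $f_k\equiv 0$ on $[0,t]$ for each $k$. Evaluating at $s=0$, where $Y_0=y_0$ and $J^{\mathbf{X}}_{t\leftarrow 0}$ is the Jacobian of the flow (invertible, with inverse $J^{\mathbf{X}}_{0\leftarrow t}$), this reads $\langle v,\,J^{\mathbf{X}}_{t\leftarrow 0}V_k(y_0)\rangle=0$ for $k=1,\dots,d$, so $w:=(J^{\mathbf{X}}_{t\leftarrow 0})^T v$ is orthogonal to every $V_k(y_0)$. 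By the ellipticity Condition \ref{EllipVF} the $V_k(y_0)$ span $\mathbb{R}^e$, whence $w=0$ and therefore $v=0$. This gives $\det\sigma_t\neq 0$ almost surely, and Bouleau--Hirsch then yields the density at time $t$.

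I expect the main obstacle to be bookkeeping rather than anything conceptual: one must verify carefully that $f_k$ is regular enough (finite $p$-variation with $1/p+1/\rho>1$) so that the $2$D Young integral, its Parseval expansion, and the one-dimensional Young integrals $\int_0^t f_k\,dh$ are \emph{all} simultaneously well-defined — this is exactly where the standing hypothesis $\rho<3/2$ enters — and that the localized version of the Bouleau--Hirsch criterion genuinely applies to the $\mathbb{D}^{1,2}_{loc}$-functional $Y_t$ (this is the content of the remark on $C^1_{\mathcal{H}}$-regularity). The substantive inputs — the integral representation of $\sigma_t$, the reductions packaged in Lemma \ref{LemmaNDG}, and the closing linear-algebra step via invertibility of the Jacobian and ellipticity at $y_0$ — are essentially already in place.
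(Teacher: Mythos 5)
Your proposal is correct and follows essentially the same route as the paper: reduce to a.s.\ invertibility of $\sigma_t$ via Bouleau--Hirsch (using the $C^1_{\mathcal{H}}$-regularity from Proposition \ref{GateauxFrechet}), then show any annihilating vector $v$ forces $v^TJ^{\mathbf{X}}_{t\leftarrow s}(V_k(Y_s))\equiv 0$ by non-degeneracy, and conclude $v=0$ from ellipticity and invertibility of the Jacobian. The only cosmetic difference is that you pass through the $2$D Young-integral representation of $\sigma_t$ and the Parseval expansion plus Lemma \ref{LemmaNDG}(ii), whereas the paper reads off $v^TD_hY_t=0$ for all $h\in\mathcal{H}$ directly from $0=v^T\sigma_tv=\left\vert v^TDY_t\right\vert_{\mathcal{H}}^{2}$ and the variation-of-constants formula; the two computations are equivalent.
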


\begin{proof}
Fix $t\in (0,T]$. From Proposition \ref{GateauxFrechet} we know that $%
U_{t\leftarrow 0}^{\mathbf{X}\left( \omega \right) }y_{0}=Y_{t}$ is
continuously $\mathcal{H}$-differentiable. By a well-known criterion due to
Bouleau-Hirsch\footnote{%
Combine the result of [Nualart, 4.1.3] and [Nualart, section 2].} the proof
is reduced to show a.s. invertibility of the Malliavin covariance matrix%
\begin{equation*}
\sigma _{t}=\left( \left\langle DY_{t}^{i},DY_{t}^{j}\right\rangle _{%
\mathcal{H}}\right) _{i,j=1,...,e}\in \mathbb{R}^{e\times e}.
\end{equation*}%
Assume there exists a (random) vector $v\in \mathbb{R}^{e}$ which
annihilates the quadratic form $\sigma _{t}$. Then\footnote{%
Upper $T$ denotes the transpose of a vector or matrix.}%
\begin{equation*}
0=v^{T}\sigma _{t}v=\left\vert \sum_{i=1}^{e}v_{i}DY_{t}^{i}\right\vert _{%
\mathcal{H}}^{2}\text{ \ and so\ }v^{T}DY_{t}\equiv
\sum_{i=1}^{e}v_{i}DY_{t}^{i}\in 0\in \mathcal{H}\text{.}
\end{equation*}%
By propositions \ref{VarOfConst} and \ref{GateauxFrechet}, 
\begin{equation}
\forall h\in \mathcal{H}:v^{T}D_{h}Y_{t}=\int_{0}^{t}\sum_{j=1}^{d}v^{T}J_{t%
\leftarrow s}^{\mathbf{X}}\left( V_{j}\left( Y_{s}\right) \right)
dh_{s}^{j}=0  \label{vDY_is_zero}
\end{equation}%
where the last integral makes sense as Young integral since the (continuous)
integrand has finite $p$-variation regularity. Noting that the
non-degeneracy condition on $\left[ 0,T\right] $ implies the same
non-degeneracy condition on $\left[ 0,t\right] $ we see that the integrand
in (\ref{vDY_is_zero}) must be zero on $\left[ 0,t\right] $ and evaluation
at time $0$ shows that for all $j=1,...,d$,%
\begin{equation*}
v^{T}J_{t\leftarrow 0}^{\mathbf{X}}\left( V_{j}\left( y_{0}\right) \right)
=0.
\end{equation*}%
It follows that the vector $v^{T}J_{t\leftarrow 0}^{\mathbf{X}}$ is
orthogonal to $V_{j}\left( y_{0}\right) ,\,\,\,j=1,...,d$ and hence zero.
Since $J_{t\leftarrow 0}^{\mathbf{X}}$ is invertible we see that $v=0$. The
proof is finished.
\end{proof}

The reader may be curious to hear about smoothness in this context. Adapting
standard arguments would require $L^{p}\left( \Omega \right) $ estimates on
the Jacobian of the flow $J_{t\leftarrow 0}^{\mathbf{X}\left( \omega \right)
}$. Using the fact that it satisfies a linear RDE, $dJ_{t\leftarrow t_{0}}^{%
\mathbf{X}}\left( y_{0}\right) =d\mathbf{M}^{\mathbf{X}}\cdot J_{t\leftarrow
t_{0}}^{\mathbf{X}}\left( y_{0}\right) $, with $d\mathbf{M}^{\mathbf{X}%
}=V^{\prime }\left( Y\right) d\mathbf{X}$ one can see that%
\begin{equation}
\log \left\vert J_{t\leftarrow t_{0}}^{\mathbf{X}}\left( y_{0}\right)
\right\vert =O\left( \left\Vert \mathbf{X}\right\Vert _{p\text{-var}%
}^{p}\right) .  \label{logJestimates}
\end{equation}%
(This estimate appears in \cite{NuHu} for $p<2$ but can be seen \cite{L98,
FVbook} to hold for all $p\geq 1$. We believe it to be optimal.) Using the
Gauss tail of the homogenous $p$-variation norm of Gaussian rough paths (see 
\cite{FV2, FV07}) we see that $L^{q}$-estimates for all $q<\infty $ hold
true when $p<2$ and this underlies to density results of \cite{NuHu, BH}. On
the other hand, for $p>2$ one cannot obtain $L^{q}$-estimates from (\ref%
{logJestimates}) and further probabilistic input will be needed.

\end{document}